
\documentclass[a4paper]{amsart}

\usepackage{color,cite}
\usepackage{amsfonts}
\usepackage{amssymb}
\usepackage{slashed}
\usepackage{tensor}

\usepackage{todonotes}
\usepackage[all]{xy}


\newcommand{\K}{\mathcal{K}}

\newcommand{\Z}{\mathbb{Z}}
\newcommand{\C}{\mathbb{C}}

\newcommand{\N}{\mathbb{N}}

\newcommand{\Cs}{$C^*$-}

\newcommand{\End}{\textnormal{End}}
\newcommand{\BndB}{\End^*_B}

\renewcommand{\hat}{\widehat}

\newcommand{\Pim}{\mathcal{O}_X}

\DeclareMathOperator{\tr}{tr}

\DeclareMathOperator{\Aut}{Aut}

\DeclareMathOperator{\Ext}{Ext}

\newcommand{\beq}{\begin{equation}}
\newcommand{\eeq}{\end{equation}}

\newcommand{\ot}{\otimes}
\newcommand{\hot}{\widehat \otimes}

\newcommand{\inn}[3]{\tensor[_{#1}]{\langle #2 \rangle}{_{#3}}}

\newcommand{\id}{\mathbf{1}}

%
%
 \newtheorem{thm}{Theorem}[section]
 
 \newtheorem{lem}[thm]{Lemma}
 \newtheorem{prop}[thm]{Proposition}
 \theoremstyle{definition}
 \newtheorem{defn}[thm]{Definition}
 \theoremstyle{remark}
 
 \newtheorem*{ex}{Example}

\begin{document}

%
%
%
%
%
%
%
%
%

\title[On Toeplitz extensions]
 {Toeplitz extensions in noncommutative \\ topology and mathematical physics}

 \author{Francesca Arici and Bram Mesland}

\address{Mathematical Institute\\
  Leiden University\\ P.O. Box 9512\\
  2300 RA Leiden\\ the Netherlands}
\email{f.arici@math.leidenuniv.nl}
\email{b.mesland@math.leidenuniv.nl}

\thanks{FA was partially funded by the Netherlands Organisation of Scientific Research (NWO) under the VENI grant 016.192.237.}
\subjclass{Primary 46L85; Secondary 19K35, 46L80, 47B35, 81T75, 81V70}

\keywords{Toeplitz algebras, \Cs algebras, Extensions, $KK$-Theory, Bulk-Edge Correspondence}

\date{\today}

\begin{abstract}
We review the theory of Toeplitz extensions and their role in operator $K$-theory, including Kasparov's bivariant $K$-theory. We then discuss the recent applications of Toeplitz algebras in the study of solid state systems, focusing in particular on the bulk-edge correspondence for topological insulators.
\end{abstract}

\maketitle
%
%
%
%
%
%
%
%

\section{Introduction}
Noncommutative topology is rooted in the equivalence of categories between locally compact topological spaces and commutative \Cs algebras. This duality allows for a transfer of ideas, constructions, and results between topology and operator algebras. This interplay has been fruitful for the advancement of both fields. Notable examples are the Connes--Skandalis foliation index theorem \cite{CoSk84}, the K-theory proof of the Atiyah--Singer index theorem \cite{AS1,At89}, and Cuntz's proof of Bott periodicity in $K$-theory \cite{Cu84}. Each of these demonstrates how techniques from operator algebras lead to new results in topology, or simplifies their proofs. In the other direction, Connes' development of \emph{noncommutative geometry} \cite{Co94} by using techniques from Riemannian geometry to study $C^{*}$-algebras, led to the discovery of cyclic homology \cite{Co83}, a homology theory for noncommutative algebras that generalises de Rham cohomology.

Noncommutative geometry and topology techniques have found ample applications in mathematical physics, ranging from Connes' reformulation of the standard model of particle physics \cite{Co96}, to quantum field theory \cite{CoMa}, and to solid-state physics. The noncommutative approach to the study of complex solid-state systems was initiated and developed in \cite{Be92,BevESB}, focusing on the quantum Hall effect and resulting in the computation of topological invariants via pairings between $K$-theory and cyclic homology. Noncommutative geometry techniques have proven to be a key tool in this field, and applications include the study of disordered systems, quasi-crystals and aperiodic solids \cite{Pro,ProSB}. The correct framework to describe such systems, as has been shown recently, is via $KK$-theory elements for certain observable \Cs algebras. 

This review is dedicated to a discussion of Toeplitz algebras and more generally $C^{*}$-extensions, and their role in noncommutative index theory. It is aimed at readers interested in the more recent applications of Toeplitz extensions and should serve as a brief overview and introduction to the subject. We shall provide an exposition of operator algebra techniques recently used in mathematical physics, in particular in the study of solid state systems. 

The paper is structured as follows. In Section \ref{s:toe} we review the construction of the classical one-dimensional Toeplitz algebra as the universal \Cs algebra generated by a single isometry, and we recall its role in the Noether--Gohberg--Krein index theorem, which relates the index of Toeplitz operators to the winding number of their symbol. We conclude the section by discussing how the construction can be extended to higher dimensions. In Section \ref{s:toeKK} we take a deep dive into the world of noncommutative topology and discuss the role of Toeplitz extensions in operator $K$-theory, namely in Cuntz's proof of Bott periodicity and in the development of Kasparov's bivariant $K$-theory. This rather technical section allows us to introduce the tools that are needed in the noncommutative approach to solid state physics. 
In Section \ref{sec:ToePim}, we describe two constructions of universal \Cs algebras that will later play a crucial role in the study of solid state systems, namely crossed products by the integers, Cuntz--Pimsner algebras, and their Toeplitz algebras. Finally, Section \ref{sec:SolidState} is devoted to describing how Toeplitz extensions and the associated maps in $K$-theory provide the natural framework for implementing the bulk-edge correspondence from solid state physics.

\section{Toeplitz algebras of operators}
\label{s:toe}
\subsection{Shifts, winding numbers, and the Noether--Gohberg--Krein index theorem}
\label{ss:shifts}
In view of the Gelfand--Naimark theorem \cite{GN43}, every abstract \Cs algebra, commutative or not, admits a faithful representation as a subalgebra of the algebra $B(H)$ of bounded operators on some Hilbert space $H$. In this section, we will start by constructing two concrete examples of \Cs algebras of operators. As mentioned in the Introduction, we are interested in how the commutative algebra of functions on the circle and the noncommutative algebra generated by a single isometry fit together in a short exact sequence. This extension will later serve as our prototypical example illustrating the use of \Cs algebraic techniques in solid state physics. 

Let $ S^1 := \left \lbrace z \in \C \ \vert \ \overline{z}z = 1 \right \rbrace$ denote the unit circle in the complex plane. The corresponding \Cs algebra, $C(S^1)$, is 
the closure in the supremum norm of the algebra of \emph{Laurent polynomials}
\[\mathcal{O}(S^1) = \frac{\mathbb{C}[z, \overline{z}]}{\langle \overline{z}z = 1 \rangle}.\]
The algebra  $C(S^1)$ admits a convenient representation on the Hilbert space $L^2(S^1)$ of square-integrable functions on $S^1$. This Hilbert space is isomorphic to the Hilbert space of sequences $\ell^2(\Z)$, and the isomorphism is implemented by the discrete Fourier transform
\begin{equation}
\label{eq:Fourier}
\mathcal{F}:\ell^2(\Z) \to L^2(S^1), \qquad (\mathcal{F}\phi)(z)=(2\pi)^{-\frac{1}{2}} \sum_{n \in \Z}\phi_n e^{-i {n\cdot z}}.
\end{equation}
Under this isomorphism, the operator of multiplication by $z$ is mapped to the bilateral shift operator $U$, defined on the standard basis $\{e_{n}\}_{n\in\mathbb{Z}}$ of $\ell^{2}(\mathbb{Z})$ via 
\begin{equation}
\label{eq:rightUshift} 
U(e_n)=(e_{n+1}), \quad U^*(e_n)=e_{n-1}.\end{equation}
It is easy to see that $U$ is a \emph{unitary} operator, i.e. $U^*U=\id=UU^*$. The algebra $C(S^1)$ is then isomorphic to the smallest \Cs subalgebra of $B(\ell^2(\Z))$ that contains $U$.

In order to define the second \Cs algebra we are interested in, which is  genuinely non-commutative, we shall consider the Hardy space $H^2(S^1)$. This is defined as the subset of $L^2(S^1)$ consisting of holomorphic $L^2$-functions. The discrete Fourier transform allows us to identify the Hardy space with the sequence space $\ell^2(\N)$. We will denote by $\mathrm{p}$ the orthogonal projection from $\ell^2(\Z)$ to $\ell^2(\N)$, and by $\mathrm{P}$ that from $L^2(S^1)$ onto $H^2(S^1)$ (obtained by conjugating $\mathrm{p}$ with the Fourier transform).

Multiplication by $z$ on the Hardy space corresponds to a shift operator on $\ell^2(\N)$, called the \emph{unilateral shift}, expressed on the standard basis $\{f_{n}\}_{n\in\mathbb{N}}$ of $\ell^{2}(\mathbb{N})$ via:
\[T(f_n)=(f_{n+1}).\]
Its adjoint is not invertible, as
\[T^*(f_n)=\begin{cases}
f_{n-1} & n \geq 1 \\
0 & n=0 
\end{cases}.\]
This motivates the following:
\begin{defn}
\label{def:ToeAlg}
The \emph{Toeplitz algebra} $\mathcal{T}$ is the smallest \Cs subalgebra of $B(\ell^2(\N))$ that contains $T$. 
\end{defn}
It is easy to see that the Toeplitz algebra $\mathcal{T}$ is not commutative, as 
\begin{equation}
\label{eq:Toe_commrel}
T^*T=\id, \qquad TT^*= 1-\mathrm{p}_{\ker(T^*)}.
\end{equation}
In particular, it follows from \eqref{eq:Toe_commrel} that elements of $\mathcal{T}$ commute up to compact operators, and in particular the generator $T$ is unitary module compact operators. In other words, the Toeplitz algebra can be viewed as the \Cs algebra extension of continuous functions on the circle by the compact operators:
\begin{equation}
\label{eq:Toe_ext}
\xymatrix{0 \ar[r] & \mathcal{K}(\ell^2(\N)) \ar[r] & \mathcal{T} \ar[r]^{\pi} & C(S^1) \ar[r] & 0.}
\end{equation}
The extension \eqref{eq:Toe_ext} admits a completely positive and completely contractive splitting given by the Hardy projection. Indeed, for every $f \in C(S^1)$, the assignment 
\begin{equation}
\label{eq:Toe_op}
T_f(g) = \mathrm{P}(fg), \quad g \in H^2(S^1)
\end{equation}
defines a bounded operator on the Hardy space $H^2(S^1)$, where, under Fourier transform, $T_{z}$ corresponds to the unilateral shift. As the function $z$ generates $C(S^1)$ as a \Cs algebra, every such $T_f$ is an element of $\mathcal{T}$.

The following result implies that the Toeplitz algebra is the universal \Cs algebra generated by an element $T$ satisfying $T^*T=1$:

\begin{thm}[Coburn, \cite{Co67}]
Suppose $v$ is an isometry in a unital \Cs algebra $A$. Let $T=T_{z} \in \mathcal{T}$. Then there exists a unique unital $*$-homomorphism $\phi: \mathcal{T} \to A$ such that $\phi(T)=v$. Moreover, if $vv^* \neq 1$, then the map $\phi$ is isometric. 
\end{thm}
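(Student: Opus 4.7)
The plan is to dispatch uniqueness by density, construct $\phi$ via the Wold decomposition of the isometry $v$, and derive the final isometry statement from the simplicity and essentiality of $\mathcal{K}(\ell^2(\N))$ inside $\mathcal{T}$.

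Uniqueness is immediate: two unital $*$-homomorphisms $\mathcal{T} \to A$ sending $T$ to $v$ agree on the (dense) unital $*$-subalgebra of $\mathcal{T}$ generated by $T$, and therefore coincide by continuity.

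For existence, I would faithfully embed $A \subset B(H)$ via Gelfand--Naimark and apply the Wold decomposition to the isometry $v \in B(H)$. This produces an orthogonal splitting $H = H_u \oplus (\ell^2(\N) \ot K)$ under which $v = v_u \oplus (T \ot \id_K)$, with $v_u \in B(H_u)$ unitary. I would then define $\Phi: \mathcal{T} \to B(H)$ by the formula $\Phi(a) := (\sigma \circ \pi)(a) \oplus (a \ot \id_K)$, where $\pi: \mathcal{T} \to C(S^1)$ is the quotient in \eqref{eq:Toe_ext} and $\sigma: C(S^1) \to B(H_u)$ is the continuous functional calculus for $v_u$. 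This is a unital $*$-homomorphism with $\Phi(T) = v$. Since $T$ generates $\mathcal{T}$ as a unital $C^*$-algebra (using $1 = T^*T$), the image $\Phi(\mathcal{T})$ is the $C^*$-subalgebra of $B(H)$ generated by $v$, which is contained in $A$; corestriction then produces the required map $\phi: \mathcal{T} \to A$.

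For the isometry statement, assume $1 - vv^* \ne 0$. Since an injective $*$-homomorphism of $C^*$-algebras is automatically isometric, it suffices to show $\ker\phi = 0$. By hypothesis $\phi(1-TT^*) = 1-vv^* \ne 0$, and $1-TT^*$ is a minimal projection in $\mathcal{K}(\ell^2(\N)) \subset \mathcal{T}$, so $\ker\phi \cap \mathcal{K}(\ell^2(\N))$ is a proper two-sided ideal of the simple algebra $\mathcal{K}(\ell^2(\N))$ and hence vanishes. To upgrade this to $\ker\phi = 0$, I would invoke the essentiality of $\mathcal{K}(\ell^2(\N))$ in $\mathcal{T}$: for any $a \in \ker\phi$ and $k \in \mathcal{K}(\ell^2(\N))$, $ka \in \ker\phi \cap \mathcal{K}(\ell^2(\N)) = 0$, so $\mathcal{K}(\ell^2(\N)) \cdot a = 0$; testing against all rank-one operators then forces $a = 0$.

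The main technical subtlety lies in the existence step, specifically in verifying $\Phi(\mathcal{T}) \subset A$. The Wold construction naturally produces operators in $B(H)$, and it is not a priori clear that they sit inside $A \subset B(H)$; the resolution is the observation that $\mathcal{T}$ is singly generated and $\Phi(T) = v \in A$. Everything else---Wold decomposition, simplicity of the compacts, and essentiality of $\mathcal{K}(\ell^2(\N))$---is standard once the stage is set by the extension \eqref{eq:Toe_ext}.
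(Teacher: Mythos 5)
Your proof is correct, and since the paper states Coburn's theorem without proof (deferring entirely to the citation \cite{Co67}), there is no in-paper argument to diverge from; your route is the classical one. All three steps check out: uniqueness from density of the $*$-polynomials in $T$ (using $T^{*}T=1$ to get unitality), existence via the Wold decomposition $v=v_{u}\oplus(T\otimes \id_{K})$ combined with the symbol map $\pi$ and functional calculus for the unitary part, and injectivity when $vv^{*}\neq 1$ from simplicity plus essentiality of $\mathcal{K}(\ell^{2}(\N))$ in $\mathcal{T}$ --- including the correct observation that the image lands in $A$ because $\mathcal{T}$ is singly generated and $\Phi(T)=v\in A$.
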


\subsubsection{The Noether--Gohberg--Krein index theorem}

Recall that an operator $F\in B(H)$ is a \emph{Fredholm operator} if both $\ker F$ and $\ker F^{*}$ are finite-dimensional. The \emph{Fredholm index} of such an operator
is the integer \[
\mathrm{Ind}(F)= \dim \ker F-\dim\ker F^{*}\in\mathbb{Z}.\] One of the key properties of the Fredholm index is that it is constant along continuous paths of Fredholm operators.
As such it is a homotopy invariant.

The completely positive linear splitting $f\mapsto T_{f}$ allows one to give a precise characterization of which Toeplitz operators $T_{f}$ are Fredholm. Moreover,
the index of a Fredholm Toeplitz operator $T_{f}$ can be described entirely in terms of a familiar homotopy invariant of the complex function $f$. This is the content of the Toeplitz
index theorem, due to F. Noether and later reproved independently by Gohberg and Krein. It was one of the first results linking index theory to topology and should be viewed
as an ancestor to the celebrated Atiyah-Singer index theorem.
\begin{thm}[{Noether \cite{Noe20}, Gohberg--Krein \cite{GoKr}}] For $f:S^{1}\to \mathbb{C}^{\times}$ the operator $T_{f}:H^{2}(S^1) \to H^{2}(S^1)$ is Fredholm and
\[\mathrm{Ind}\left( T_{f} \right)=-w(f),\]
with $w(f)$ the winding number of $f$. If $f$ is a $C^{1}$-function, then the winding number can be computed as \[w(f)=\int_{S^{1}}\frac{f'(z)}{f(z)}\mathrm{d}z.\]
\end{thm}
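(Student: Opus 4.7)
The plan is to leverage the Toeplitz extension \eqref{eq:Toe_ext} to reduce both statements to an explicit verification on the generators $f(z)=z^n$, using the homotopy invariance of the Fredholm index and of the winding number as the two pillars of the argument.

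For Fredholmness, the short exact sequence \eqref{eq:Toe_ext} identifies $C(S^1)$ with the Calkin-type quotient $\mathcal{T}/\mathcal{K}(\ell^2(\N))$ via the symbol map $\pi$. Since $T_f$ is, by construction \eqref{eq:Toe_op}, a lift of $f$, Atkinson's theorem implies that $T_f$ is Fredholm precisely when $f$ is invertible in $C(S^1)$, i.e. when $f\colon S^1\to \C^\times$. Moreover the completely positive splitting $f\mapsto T_f$ satisfies $\|T_f\|\leq \|f\|_\infty$, so a continuous homotopy $f_t\colon S^1\to \C^\times$ produces a norm-continuous path of Fredholm operators $T_{f_t}$, along which the integer $\mathrm{Ind}(T_{f_t})$ is constant. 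The winding number being itself a homotopy invariant on $\C^\times$-valued maps, and every continuous $f\colon S^1\to \C^\times$ being homotopic to $z\mapsto z^n$ with $n=w(f)$ (since $\pi_1(\C^\times)\cong\Z$ is detected by $w$), it suffices to verify $\mathrm{Ind}(T_{z^n})=-n$ for all $n\in \Z$.

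For the generators, since multiplication by $z$ preserves $H^2(S^1)$ one has $T_z^n=T_{z^n}$ exactly, so $T_{z^n}=T^n$ for $n\geq 0$ with $T$ the unilateral shift of Section \ref{ss:shifts}. A direct inspection gives $\ker T^n=\{0\}$ and $\ker(T^*)^n=\mathrm{span}\{f_0,\ldots,f_{n-1}\}$, yielding $\mathrm{Ind}(T^n)=-n$. For $n<0$ the relation $T_{\bar f}=T_f^*$, together with $\mathrm{Ind}(T^*)=-\mathrm{Ind}(T)$, produces $\mathrm{Ind}(T_{z^n})=-n$ as well. Finally, for $f\in C^1(S^1,\C^\times)$ the integral formula for $w(f)$ is the classical identity expressing the degree of $f/|f|\colon S^1\to S^1$ as the pullback integral of the normalised angular form, proved by Cauchy's argument principle.

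The main technical hurdle is to establish the homotopy invariance cleanly: namely, that $f\mapsto T_f$ is norm-continuous from $C(S^1)$ into $B(H^2(S^1))$ and that the resulting family is Fredholm-continuous over $C(S^1,\C^\times)$, so that $\mathrm{Ind}$ is locally constant along paths. With that in hand, and the explicit dimension count for the kernels of $T^n$ and $(T^*)^n$, the theorem reduces to the elementary algebraic-topology fact that continuous maps $S^1\to \C^\times$ are classified up to homotopy by their winding number.
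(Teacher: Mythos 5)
The paper states this as a classical result with citations to Noether and Gohberg--Krein and gives no proof of its own, so there is nothing to compare against; your argument is the standard proof and it is correct. The three pillars --- (i) Fredholmness via the symbol map of the extension \eqref{eq:Toe_ext}, (ii) homotopy invariance of both $\mathrm{Ind}$ and $w$ reducing everything to the generators $z^{n}$ (using $\pi_{1}(\C^{\times})\cong\Z$), and (iii) the explicit kernel count $\ker T^{n}=0$, $\dim\ker (T^{*})^{n}=n$ --- are exactly what is needed. Two small remarks. First, in the Fredholmness step you pass from invertibility of $f$ in $C(S^1)\cong\mathcal{T}/\mathcal{K}$ to invertibility of $T_f+\mathcal{K}$ in the full Calkin algebra; this uses spectral permanence for unital $C^{*}$-subalgebras, which you should cite, or you can avoid it entirely by noting that $T_{f^{-1}}$ is a parametrix since $T_fT_g-T_{fg}$ is compact. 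Second, the integral formula as displayed in the paper is missing the usual normalisation $\frac{1}{2\pi i}$; your phrasing via the normalised angular form and the argument principle is the correct statement, so no change is needed on your end.
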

The latter, explicit expression for the winding number and hence the Toeplitz index should be viewed as a result of \emph{differential} topology: By choosing a nice representative in the homotopy class of the function $f$, the differential calculus can be employed to compute a topological invariant. We will see an application of this computation in Section~\ref{sec:SolidState}.
\subsection{Generalisation: higher Toeplitz algebras}
\subsubsection{Toeplitz operators on strongly pseudo-convex domains}
The definition of Toe\-plitz operators on the circle in terms of the Hardy space lends itself to generalisations to higher dimensions. The crucial observation here is that the Hardy space $H^2(S^1)$ can be defined as the closure of the space of boundary values of holomorphic functions on the unit disk that admit a continuous extension to the closed unit disk.

\begin{defn}[{\!\cite[Definition 1.2.18]{Up96}}]
Let $\Omega $ be a smooth-domain in $\C^n$ with defining function $\rho \in C^{\infty}(\C^n)$:
\[ \Omega = \lbrace z \in \C^n \ : \rho(z)<0 \rbrace \]
and boundary $\partial\Omega=\lbrace z \in \C^n \ : \rho(z)=0 \rbrace$.

For every $z \in \partial \Omega$, the Levi form $\langle \, , \, \rangle_z$ is defined as \[ \langle u,v \rangle_z := \sum_{1 \geq i,j \geq n} \frac{\partial^2 \rho}{\partial z_i \partial \overline{z}_j} (z) u_j \overline{v}_j, \qquad u, v \in \C^n.\]

Then $\Omega$ is called a \emph{strongly pseudo-convex domain} if the Levi form is positive semi-definite on the complex tangent space at every point $z \in \partial \Omega$, i.e., $u \in T_z(\partial \Omega ), u \neq 0$ implies $\langle u,u \rangle_z >0$,
\end{defn}

Open balls in $\C^n$ are examples of strongly pseudo-convex domains. However, the product of two open balls is not strongly pseudo-convex, showing the notion is somewhat subtle. 

Given a strongly pseudo-convex domain $\Omega \subseteq \C^n$ with smooth boundary, we denote by $L^2(\partial \Omega)$ the Hilbert space of square integrable functions on the boundary $\partial \Omega$.  The Hardy space $H^2(\partial \Omega)$ is defined as the Hilbert space closure in $L^2(\partial \Omega)$ of boundary values of homolomorphic functions on $\Omega$ that admit a continuous extensions to the boundary $\partial \Omega$ (cf. \cite[Definition 2.3]{Up96}). The orthogonal projection
\[ \mathrm{P}_{CS}: L^2(\partial \Omega) \to H^2(\partial \Omega),\]  called the Cauchy--Szeg\"o projection, is used to define Toeplitz operators, in analogy with \eqref{eq:Toe_op}. Indeed, let $f$ be a continuous function on  $\partial \Omega$, the Toeplitz operator with symbol $f$ is defined as 
\[ T_f(g) = \mathrm{P}_{CS}(fg), \]
for all $g \in H^2(\partial \Omega)$. 

For any two $f, f^\prime \in C(\partial \Omega)$, the product of Toeplitz operators $T_f\circ T_{f^\prime}$ is equal to $T_{f f^\prime}$ modulo compact operators. Moreover, for any $f \in C(\partial \Omega)$, the operator $T_f$ is compact if and only if $f$ is identically zero. These two facts combined lead to the following:
\begin{thm}
Let $\Omega$ be a strictly pseudo-convex domain. There is an extension of \Cs algebras:
\begin{equation*}
\label{eq:Toe_ext_pseudo}
\xymatrix{0 \ar[r] & \mathcal{K}(H^2(\partial \Omega)) \ar[r] & \mathcal{T}(\partial \Omega) \ar[r]^{} & C(\partial \Omega) \ar[r] & 0.}
\end{equation*}
The extension admits a completely positive and completely contractive linear splitting given by the Cauchy--Szeg\"o projection.
\end{thm}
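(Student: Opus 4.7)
The plan is to follow the template of the one-dimensional Toeplitz extension \eqref{eq:Toe_ext}, using as key inputs the two facts stated just before the theorem: that $T_{f}T_{f'}-T_{ff'}$ is compact for all $f,f'\in C(\partial\Omega)$, and that $T_{f}$ is compact only when $f=0$. Throughout, let $M_{f}$ denote multiplication by $f$ on $L^{2}(\partial\Omega)$, so that $T_{f}=\mathrm{P}_{CS}M_{f}\mathrm{P}_{CS}$.

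First I would construct the symbol $*$-homomorphism $\sigma:\mathcal{T}(\partial\Omega)\to C(\partial\Omega)$ with $\sigma(T_{f})=f$. The relation $T_{f}T_{f'}\equiv T_{ff'}$ modulo $\mathcal{K}(H^{2}(\partial\Omega))$, combined with $T_{f}^{*}=T_{\bar f}$ (self-adjointness of $\mathrm{P}_{CS}$), shows that
\[
\widetilde\sigma:f\longmapsto T_{f}+\bigl(\mathcal{T}(\partial\Omega)\cap \mathcal{K}(H^{2}(\partial\Omega))\bigr)
\]
is a $*$-homomorphism from $C(\partial\Omega)$ into $\mathcal{T}(\partial\Omega)/(\mathcal{T}(\partial\Omega)\cap \mathcal{K}(H^{2}(\partial\Omega)))$; by the second displayed fact it is injective, hence isometric. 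Since every element of $\mathcal{T}(\partial\Omega)$ is a norm limit of polynomials in $\{T_{f},T_{f}^{*}\}$, and every such polynomial equals a single $T_{g}$ modulo compacts, $\widetilde\sigma$ is surjective. Inverting it yields the desired $\sigma$, whose kernel is $\mathcal{T}(\partial\Omega)\cap \mathcal{K}(H^{2}(\partial\Omega))$.

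The main obstacle, and the place where the strictly pseudo-convex geometry really enters, is the inclusion $\mathcal{K}(H^{2}(\partial\Omega))\subseteq \mathcal{T}(\partial\Omega)$, needed to upgrade $\ker\sigma$ to the full ideal of compacts. I would combine two ingredients. First, one verifies that the natural representation of $\mathcal{T}(\partial\Omega)$ on $H^{2}(\partial\Omega)$ is irreducible: this rests on the density in $H^{2}(\partial\Omega)$ of boundary values of functions holomorphic on $\Omega$ and continuous up to $\partial\Omega$ (built into the definition of the Hardy space), together with the observation that $\mathrm{P}_{CS}M_{f}\mathrm{P}_{CS}$, as $f$ ranges over $C(\partial\Omega)$, generates an algebra with trivial commutant on $H^{2}(\partial\Omega)$. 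Second, the second displayed fact guarantees the existence of $f,f'\in C(\partial\Omega)$ for which the semi-commutator $T_{f}T_{f'}-T_{ff'}\in \mathcal{T}(\partial\Omega)$ is a non-zero compact operator. The classical fact that an irreducible $C^{*}$-subalgebra of $B(H)$ containing a single non-zero compact operator must contain all of $\mathcal{K}(H)$ then closes the gap and yields $\ker\sigma = \mathcal{K}(H^{2}(\partial\Omega))$.

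Finally, the splitting property is abstract. By construction $\sigma\circ T_{(\cdot)}=\mathrm{id}_{C(\partial\Omega)}$, and $f\mapsto T_{f}=\mathrm{P}_{CS}M_{f}\mathrm{P}_{CS}$ is the compression by the orthogonal projection $\mathrm{P}_{CS}$ of the unital $*$-representation $M:C(\partial\Omega)\to B(L^{2}(\partial\Omega))$. Compression of a unital $*$-representation by a projection is automatically unital, completely positive, and completely contractive (via Stinespring's dilation theorem), which provides the required completely positive and completely contractive linear splitting.
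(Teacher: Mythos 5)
The paper itself offers no written proof---it derives the theorem from the two displayed facts (semi-commutators are compact; $T_{f}$ is compact only for $f=0$) and defers to \cite{Up96}---and your architecture (symbol map onto $C(\partial\Omega)$, identification of the kernel with the compacts via irreducibility, splitting by compression) is exactly the standard argument being gestured at. The symbol-map paragraph and the splitting paragraph are correct: an injective $*$-homomorphism of $C^{*}$-algebras is isometric, so $\widetilde\sigma$ has closed range containing a dense subalgebra; and $f\mapsto \mathrm{P}_{CS}M_{f}\mathrm{P}_{CS}$ is a unital completely positive (hence completely contractive) compression of a $*$-representation.

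There are, however, two genuine gaps in the middle step. First, the existence of a \emph{non-zero} compact semi-commutator does not follow from the second displayed fact, as you claim: that fact is perfectly consistent with all semi-commutators vanishing. What you actually need is the identity $T_{|f|^{2}}-T_{\bar f}T_{f}=\bigl((1-\mathrm{P}_{CS})M_{f}\mathrm{P}_{CS}\bigr)^{*}\bigl((1-\mathrm{P}_{CS})M_{f}\mathrm{P}_{CS}\bigr)$, which vanishes for all $f\in C(\partial\Omega)$ only if $H^{2}(\partial\Omega)$ reduces the multiplication representation of $C(\partial\Omega)$ on $L^{2}(\partial\Omega)$; since $L^{\infty}(\partial\Omega)$ is maximal abelian this would force $\mathrm{P}_{CS}$ to be multiplication by a characteristic function, hence (as $1\in H^{2}$) $H^{2}=L^{2}$, which is false. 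Second, your irreducibility argument is circular: ``generates an algebra with trivial commutant on $H^{2}(\partial\Omega)$'' is the \emph{definition} of irreducibility, not an observation, and density of holomorphic boundary values only gives that the constant function is a cyclic vector, which is strictly weaker. For the unit circle irreducibility of the unilateral shift is elementary, but for a general strictly pseudo-convex $\partial\Omega$ the triviality of the commutant of $\{\mathrm{P}_{CS}M_{f}\mathrm{P}_{CS}\}$ is a real theorem requiring analytic input on the Cauchy--Szeg\H{o} projection (reproducing-kernel estimates, or the microlocal description of $\mathrm{P}_{CS}$); it is precisely the content one must import from \cite{Up96}. Until that is supplied, you only obtain an extension by the ideal $\mathcal{T}(\partial\Omega)\cap\mathcal{K}(H^{2}(\partial\Omega))$, not by all of $\mathcal{K}(H^{2}(\partial\Omega))$ as the theorem asserts.
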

Applied to the unit ball in $\mathbb{C}^{n}$ this construction yields the Toeplitz extensions for odd-dimensional spheres as a special case:
\begin{equation*}
\label{eq:Toe_ext_spheres}
\xymatrix{0 \ar[r] & \mathcal{K}(H^2(S^{2d-1})) \ar[r] & \mathcal{T}(S^{2d-1}) \ar[r]^{} & C(S^{2d-1}) \ar[r] & 0,}
\end{equation*}
which clearly recover \eqref{eq:Toe_ext} for $d=1$.

The Toeplitz algebra $\mathcal{T}(S^{2d-1})$ admits an equivalent description in terms of so-called $d$-shifts, as described in \cite[Theorem 5.7]{Arv98}. For an overview of the interplay of Toeplitz $C^{*}$-algebras and index theory, as well as their role in the computation of noncommutative invariants, we refer the reader to the excellent survey \cite{Le}.

\section{Toeplitz algebras in operator $K$-theory and bivariant $K$-theory}
\label{s:toeKK}

Operator $K$-theory is a functor, associating to a $C^{*}$-algebra $A$ two  Abelian groups $K_{*}(A)$, $*=0,1$.
Functoriality means that for a $*$-homomorphism $\varphi:A\to B$ between $C^*$-algebras $A$ and $B$, there is an induced homomorphism of Abelian groups
\[\varphi_{*}:K_{*}(A)\to K_{*}(B).\]

The key properties of the operator $K$-theory functor are that it is \emph{homotopy invariant}, \emph{half-exact} and \emph{Morita invariant}.
We now define each of these properties more precisely.

Homotopy invariance is the property that if $\varphi$ and $\psi$ are connected by a continuous path of $*$-homomorphisms, then the induced maps on $K$-theory coincide, that is $\varphi_{*}=\psi_{*}$. 

Half-exactness is the property that for any extension of $C^{*}$-algebras
\begin{equation}
\label{extension}
\xymatrix{0 \ar[r] & I \ar[r]^{i} & E \ar[r]^{p} & A\ar[r] & 0,}
\end{equation}
the corresponding sequence of groups
\[ \xymatrix{K_{*}(I)\ar[r]^{i_*} & K_{*}(E) \ar[r]^{p_*} & K_{*}(A),}\]
is exact at $K_{*}(E)$. 

Lastly, Morita invariance entails that for any rank-one
projection $p\in\mathcal{K}=\mathcal{K}(\ell^{2}(\mathbb{N}))$, the $*$-homomorphism
\[A\to \mathcal{K}\otimes A,\quad a\mapsto p\otimes a,\]
induces an isomorphism in $K$-theory.

Recall that the \emph{suspension} $SA$ of a $C^{*}$-algebra $A$ is defined to be \[SA:=C_0(0,1)\otimes A\simeq C_{0}((0,1),A),\]
which is a $C^{*}$-algebra in the sup-norm, and pointwise product and involution inherited from $A$. 

The operation
$A\to SA$ is functorial for $*$-homorphisms, and it is customary to define the \emph{higher} $K$-groups
as $K_{n}(A):=K_{0}(S^{n}A)$. Via a general construction in topology, it follows that the extension \eqref{extension}
induces a long exact sequence
\begin{equation}
\label{exseq}
\cdots \to K_{n+1}(A)\to K_{n}(I)\to K_{n}(E)\to K_{n}(A)\to K_{n-1}(I)\to \cdots,
\end{equation}
of Abelian groups.

The boundary maps in such exact sequences are often related to index theory. For instance, for
the Toeplitz extension \eqref{eq:Toe_ext}, the boundary map 
\begin{equation}
\label{eq:ToeBndry}
\partial:K_{1}(C(S^{1}))\to K_{0}(\mathcal{K}(\ell^{2}(\mathbb{N}))\simeq \mathbb{Z},
\end{equation}
maps the class of a nonzero function $f\in C(S^{1})$ to the index of the Toeplitz operator $T_{f}$. 

One of the key features of operator $K$-theory is \emph{Bott periodicity}. It states that for any $C^{*}$-algebra $A$
there are natural isomorphisms between its $K$-theory and the $K$-theory of its double suspension $S^{2}A$. 
 It turns out that the three properties of homotopy invariance, half-exactness and Morita invariance suffice
to deduce the existence of natural \emph{Bott periodicity} isomorphisms $K_{*}(A)\simeq K_{*}(S^{2}A)$. As a consequence, there are only 
two $K$-functors, $K_0$ and $K_1$, and the exact sequence \eqref{exseq} reduces the cyclic six-term exact sequence 
\[
\label{eq:6es}
\xymatrix{
& K_0(I) \ar[r]^{i_*} & K_0(E) \ar[r]^{p_*} & K_0(A) \ar[d] \\
& K_1(A) \ar[u] &K_1(E)  \ar[l]^{p_*} & K_1(I)\ar[l]^{i_{*}} 
.}
\]
\subsection{Cuntz's proof of Bott periodicity}
Apart from the invariance properties of the $K$-functor, Cuntz's proof of Bott periodicity (cf. \cite{Cu84}) exploits essential properties of the Toeplitz extension \eqref{eq:Toe_ext}. By composing the projection homomorphism $\pi: \mathcal{T} \to C(S^1)$ with the evaluation map $\textnormal{ev}_{1}:C(S^{1})\to\mathbb{C}$, given by  $\mathrm{ev}_1(f) = f(1)$,
we obtain a character of $\mathcal{T}$:
\begin{equation}
\label{eq:chi}
\chi:=\textnormal{ev}_{1}\circ\pi:\mathcal{T}\to \mathbb{C}.
\end{equation} 
The unital embedding $\iota:\mathbb{C}\to\mathcal{T}$ splits the homomorphism $\chi$ in the sense that $\chi\circ\iota=\textnormal{id}_{\mathbb{C}}$.
It is a non-trivial fact these $*$-homomorphisms are mutually inverse in $K$-theory, in a strong sense made precise below. 

To state the result, which lies
at the heart of the proof of the Bott periodicity theorem, we shall recall the construction of the \emph{spatial} or \emph{minimal} tensor product $A_{1}\overline{\otimes}A_{2}$ of $C^{*}$-algebras $A_{i}, i=1,2$. 
Choose faithful representations $\pi_{i}:A_{i}\to B(\mathcal{H}_{i})$ and let $\mathcal{H}_{1}\otimes \mathcal{H}_2$ be the completed tensor product of Hilbert spaces. One defines $A\overline{\otimes}B$ to be the completion of the algebraic tensor product $A\otimes B$ 
in the norm inherited from the representation
\[\pi_{1}\otimes \pi_{2}:A_{1}\otimes A_{2}\to B(\mathcal{H}_{1}\otimes\mathcal{H}_{2}).\]

\begin{prop}[\!\!{\cite[Proposition 4.3]{Cu84}}]
\label{prop: toeplitzisawesome}
Let $A$ be a $C^{*}$-algebra. The map $\chi\otimes 1:\mathcal{T}\overline{\otimes}A\to A$ induces an isomorphism $\chi_{*}\otimes 1:K_{0}(\mathcal{T}\overline{\otimes}A)\xrightarrow{\sim} K_{0}(A)$.
\end{prop}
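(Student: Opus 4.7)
The plan is in three steps.

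First, functoriality of $K_0$ combined with $\chi\circ\iota=\mathrm{id}_{\mathbb{C}}$ immediately gives $(\chi_*\otimes 1)\circ(\iota_*\otimes 1)=\mathrm{id}_{K_0(A)}$, so $\chi_*\otimes 1$ is already a split surjection with section $\iota_*\otimes 1$. The entire content of the statement therefore lies in the \emph{injectivity} of $\chi_*\otimes 1$, equivalently in the assertion that the $*$-endomorphism $(\iota\circ\chi)\otimes \mathrm{id}_A$ of $\mathcal{T}\overline{\otimes}A$ acts as the identity on $K_0$.

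Second, I would reduce injectivity to a vanishing statement. Setting $\mathcal{T}_0:=\ker\chi$, the split extension $0\to\mathcal{T}_0\to\mathcal{T}\xrightarrow{\chi}\mathbb{C}\to 0$, with splitting $\iota$, remains split exact after tensoring with $A$ (the splitting being a continuous linear right inverse survives the completion), yielding
\begin{equation*}
0\to\mathcal{T}_0\overline{\otimes}A\to \mathcal{T}\overline{\otimes}A\xrightarrow{\chi\otimes 1} A\to 0,
\end{equation*}
split by $\iota\otimes 1$. Half-exactness of $K_0$ together with this splitting yields a split short exact sequence of abelian groups
\begin{equation*}
0\to K_0(\mathcal{T}_0\overline{\otimes}A)\to K_0(\mathcal{T}\overline{\otimes}A)\xrightarrow{\chi_*\otimes 1} K_0(A)\to 0.
\end{equation*}
Consequently, the proposition is equivalent to the vanishing $K_0(\mathcal{T}_0\overline{\otimes}A)=0$.

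Third, and here lies the main obstacle, I would establish this vanishing via an Eilenberg swindle driven by the isometric generator $T$. The starting point is the observation that the unitary
\begin{equation*}
V=\begin{pmatrix} T & 1-TT^* \\ 0 & T^* \end{pmatrix}\in M_2(\mathcal{T})
\end{equation*}
satisfies $V(x\oplus 0)V^*=TxT^*\oplus 0$, which shows that the compressed endomorphism $\phi(x)=(T\otimes 1)\,x\,(T^*\otimes 1)$ of $\mathcal{T}_0\overline{\otimes}A$ is unitarily equivalent to the identity at the level of matrix projections, and hence induces the identity on $K_0$. Passing via Morita invariance to the stabilization $\mathcal{T}_0\overline{\otimes}A\overline{\otimes}\mathcal{K}$, one can then assemble the iterates $\phi^n$ into an infinite direct sum $\Phi(x)=\sum_{n\geq 0}\phi^n(x)\otimes e_{nn}$. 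Comparing $\Phi$ with its shifted version produces the swindle relation $[x]+[x]=[x]$ in $K_0$, forcing every class to vanish. The delicate work lies in realising this swindle rigorously while invoking only the three structural properties of $K_0$ already at our disposal: homotopy invariance, half-exactness, and Morita invariance.
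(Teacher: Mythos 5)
The paper does not actually prove this proposition --- it is quoted verbatim from Cuntz's article --- so your attempt can only be measured against the standard arguments. Your first two steps are correct and are exactly the usual reduction: $\chi_*\otimes 1$ is split surjective with section $\iota_*\otimes 1$; the split extension $0\to\mathcal{T}_0\overline{\otimes}A\to\mathcal{T}\overline{\otimes}A\to A\to 0$ remains exact after tensoring (nuclearity of $\mathbb{C}$, i.e.\ Lemma~\ref{exactextension}); and split-exactness of $K_0$ reduces the proposition to the vanishing $K_{0}(\mathcal{T}_{0}\overline{\otimes}A)=0$. Your observation that $\phi(x)=(T\otimes 1)x(T^{*}\otimes 1)$ is an endomorphism of $\mathcal{T}_{0}\overline{\otimes}A$ inducing the identity on $K_{0}$, via the unitary $V$, is also correct.

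The gap is Step 3, and it is not a ``delicate point to be realised rigorously'' but a structural failure. The series $\Phi(x)=\sum_{n\geq 0}\phi^{n}(x)\otimes e_{nn}$ does not converge in $(\mathcal{T}_{0}\overline{\otimes}A)\overline{\otimes}\mathcal{K}$: since $T^{n}$ is an isometry one has $\|\phi^{n}(x)\|=\|x\|$ for every $n$, so the partial sums are not Cauchy and $\Phi$ only defines a $*$-homomorphism into the multiplier algebra of $(\mathcal{T}_{0}\overline{\otimes}A)\overline{\otimes}\mathcal{K}$ --- an algebra whose $K_{0}$ vanishes for precisely this swindle reason, and which therefore detects nothing about $K_{0}(\mathcal{T}_{0}\overline{\otimes}A)$. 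Worse, your argument uses no property of $\mathcal{T}_{0}\overline{\otimes}A$ beyond the existence of an endomorphism acting as the identity on $K_{0}$; the identity map is such an endomorphism for \emph{every} $C^{*}$-algebra, so if the swindle were valid it would prove $K_{0}(D)=0$ for all $D$. This shows the approach cannot be repaired along these lines: the vanishing of $K_{0}(\mathcal{T}_{0}\overline{\otimes}A)$ is a genuine homotopy-theoretic fact about the Toeplitz algebra, not a formal consequence of $\phi_{*}=\mathrm{id}$. Cuntz's proof (see also Blackadar, \S 19.9, or Cuntz--Meyer--Rosenberg, \S 4.2) instead shows directly that $(\iota\circ\chi)\otimes 1$ and the identity of $\mathcal{T}\overline{\otimes}A$ induce the same map on any stable, homotopy-invariant, split-exact functor, by a homotopy argument --- conveniently phrased in the language of quasi-homomorphisms --- that exploits the specific position of the isometry $T$ and of the compacts inside $\mathcal{T}$. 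That homotopy is where the entire content of the proposition resides, and your proposal currently contains no substitute for it.
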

Tensor products of $C^{*}$-algebras are not unique, and the spatial tensor product is the completion in the minimal $C^{*}$-norm on the algebraic tensor product $A\otimes B$. There is also a maximal $C^{*}$-norm on $A\otimes B$, which involves taking the supremum over all representations. A $C^{*}$-algebra $N$ is \emph{nuclear}, if for any other $C^{*}$-algebra $A$, the minimal and maximal $C^{*}$-tensor norms on $N\otimes A$ coincide. For our purposes it suffices to know that all commutative $C^{*}$-algebras are nuclear. Given an extension of $C^{*}$-algebras
\begin{equation}
\label{absext}
\xymatrix{0 \ar[r] & I \ar[r] & E \ar[r] & B \ar[r] & 0},
\end{equation}
the sequence of tensor products
\begin{equation}
\label{tensabsext}
\xymatrix{0 \ar[r] & I\overline{\otimes} A \ar[r] & E\overline{\otimes} A \ar[r]  & B\overline{\otimes} A \ar[r] & 0},
\end{equation}
may fail to be exact in the middle. However, nuclearity of the $C^{*}$-algebra $B$ guarantees exactness.
\begin{lem}[{cf. \cite[Corollary 3.7.4]{BrOz08}}]\label{exactextension}
Let $A$ be a $C^{*}$-algebra and consider an extension \eqref{absext}. If the $C^{*}$-algebra $B$ is nuclear, then the sequence \eqref{tensabsext} is exact.
\end{lem}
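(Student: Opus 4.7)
The plan is to reduce the question about the spatial (minimal) tensor norm to the analogous statement for the maximal tensor norm, where exactness is automatic, and then transfer the conclusion back via nuclearity of $B$. Concretely, I would form the commutative diagram whose rows are the sequences obtained by tensoring \eqref{absext} with $A$ using the two $C^*$-norms and whose vertical arrows are the canonical surjections $\otimes_{\max}\to \otimes_{\min}$:
\[
\xymatrix{
0 \ar[r] & I\otimes_{\max}A \ar[r]\ar[d] & E\otimes_{\max}A \ar[r]\ar[d] & B\otimes_{\max}A \ar[r]\ar[d]^{\cong} & 0 \\
0 \ar[r] & I\otimes_{\min}A \ar[r] & E\otimes_{\min}A \ar[r] & B\otimes_{\min}A \ar[r] & 0.
}
\]
By hypothesis $B$ is nuclear, so the rightmost vertical arrow is an isomorphism. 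A classical argument using Arveson's extension theorem (Stinespring dilations of completely positive lifts of states) shows that the top row is exact; this is the content of the fact that the maximal tensor product is an exact functor and is the step I would simply cite from the standard literature.

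Exactness of the bottom row at the outer terms is essentially formal. Injectivity of $I\otimes_{\min} A\to E\otimes_{\min}A$ is the well-known injectivity of the spatial norm on $C^*$-subalgebras: choosing any faithful representation of $E$ restricts to a faithful representation of $I$, so the spatial norm on $I\otimes A$ inherited from $E$ agrees with the spatial norm on $I\otimes A$ intrinsically. Surjectivity of $E\otimes_{\min}A\to B\otimes_{\min}A$ is a density argument, since the image contains the algebraic tensor product $B\odot A$, which is norm-dense in $B\otimes_{\min}A$.

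For the delicate middle exactness, I would perform a straightforward diagram chase. Let $x\in E\otimes_{\min}A$ lie in the kernel of the quotient map to $B\otimes_{\min}A$. Using that the left-hand vertical arrow in the middle column is surjective, lift $x$ to some $\widetilde{x}\in E\otimes_{\max}A$. Its image $\bar{x}\in B\otimes_{\max}A$ maps to $0$ under the isomorphism $B\otimes_{\max}A\xrightarrow{\cong} B\otimes_{\min}A$, hence $\bar{x}=0$. Exactness of the top row then places $\widetilde{x}$ in $I\otimes_{\max}A$, and pushing forward through the diagram identifies $x$ with the image of an element of $I\otimes_{\min}A$ in $E\otimes_{\min}A$, as required.

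The main obstacle, conceptually, is the invocation of exactness of the maximal tensor product: this is where the real work sits, and without it no amount of elementary manipulation of spatial tensor products would give the conclusion — indeed, non-exactness of the minimal tensor product is precisely the failure of a $C^*$-algebra to be exact. Apart from that input, everything reduces to the diagram chase outlined above and the elementary functoriality properties of $\otimes_{\min}$ and $\otimes_{\max}$.
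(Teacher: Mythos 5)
Your argument is correct and is essentially the standard proof of this statement: the paper itself gives no proof but merely cites \cite[Corollary 3.7.4]{BrOz08}, and the two-row comparison of the $\otimes_{\max}$ and $\otimes_{\min}$ sequences, using exactness of the maximal tensor product together with nuclearity of the quotient to make the right-hand vertical arrow an isomorphism, followed by the diagram chase, is exactly the argument found there. The only minor inaccuracy is attributing exactness of $\otimes_{\max}$ to Arveson's extension theorem --- the usual proof instead extends representations from the ideal $I$ via approximate units and the universal property of the maximal norm --- but since you invoke that exactness as a cited black box, this does not affect the validity of your proof.
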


We can now exploit Proposition \ref{prop: toeplitzisawesome}, Lemma \ref{exactextension}, and the exactness properties of the $K$-functor to deduce Bott periodicity.
\begin{thm}
\label{thm:BottPer}
For any $C^{*}$-algebra $A$ there are natural isomorphisms $K_{n}(A)\simeq K_{n+2}(A)$. 
\end{thm}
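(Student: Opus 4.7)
The plan is to establish the case $K_{0}(A)\simeq K_{2}(A)$; the general periodicity $K_{n}(A)\simeq K_{n+2}(A)$ then follows from $K_{n}(A)=K_{0}(S^{n}A)$ and the functoriality of suspension.

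First I would decompose the Toeplitz extension into two more tractable pieces by slicing it with the character $\chi$ of \eqref{eq:chi}. Setting $\mathcal{T}_{0}:=\ker\chi$, I obtain a split extension
\[0\to\mathcal{T}_{0}\to\mathcal{T}\xrightarrow{\chi}\mathbb{C}\to 0,\]
split by the unital embedding $\iota$, while restricting the Toeplitz extension \eqref{eq:Toe_ext} to $\mathcal{T}_{0}$ yields
\[0\to\mathcal{K}(\ell^{2}(\mathbb{N}))\to\mathcal{T}_{0}\to S\mathbb{C}\to 0,\]
using the identification $\ker(\mathrm{ev}_{1})\simeq C_{0}(S^{1}\setminus\{1\})\simeq S\mathbb{C}$. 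Since $\mathbb{C}$ and $S\mathbb{C}$ are commutative and hence nuclear, Lemma \ref{exactextension} guarantees that both extensions remain exact after tensoring with an arbitrary $C^{*}$-algebra $A$.

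The heart of the argument is to show $K_{0}(\mathcal{T}_{0}\overline{\otimes}A)=K_{1}(\mathcal{T}_{0}\overline{\otimes}A)=0$. The splitting by $\iota\otimes 1$ decomposes $K_{*}(\mathcal{T}\overline{\otimes}A)\simeq K_{*}(\mathcal{T}_{0}\overline{\otimes}A)\oplus K_{*}(A)$, with $\chi_{*}\otimes 1$ killing the first summand and restricting to the identity on the second. Proposition \ref{prop: toeplitzisawesome} asserts that $\chi_{*}\otimes 1$ is an isomorphism on $K_{0}$, which forces $K_{0}(\mathcal{T}_{0}\overline{\otimes}A)=0$. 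To upgrade this to a $K_{1}$-vanishing I would reapply Proposition \ref{prop: toeplitzisawesome} with $SA$ in place of $A$; combined with the natural identification $\mathcal{T}\overline{\otimes}SA\simeq S(\mathcal{T}\overline{\otimes}A)$, this yields $K_{1}(\mathcal{T}\overline{\otimes}A)\simeq K_{1}(A)$ through $\chi_{*}\otimes 1$, and hence $K_{1}(\mathcal{T}_{0}\overline{\otimes}A)=0$.

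Finally, I would feed these vanishings into the six-term exact sequence of the second extension, invoking Morita invariance $K_{*}(\mathcal{K}\overline{\otimes}A)\simeq K_{*}(A)$ together with $K_{0}(SA)=K_{1}(A)$ and $K_{1}(SA)=K_{2}(A)$. Four of the six groups collapse, and exactness forces the connecting map $\partial:K_{2}(A)\to K_{0}(A)$ to be an isomorphism, delivering the Bott map; naturality in $A$ is then a routine check, since the whole construction has been assembled from $*$-homomorphisms depending functorially on $A$. The main obstacle is the bootstrapping from $K_{0}$ to $K_{1}$: Proposition \ref{prop: toeplitzisawesome} is stated only for $K_{0}$, so the essential move is the reapplication of it after suspension of $A$, exploiting the commutation of $\mathcal{T}\overline{\otimes}(-)$ with suspension. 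Once this is in place the remainder is a diagram chase.
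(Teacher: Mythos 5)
Your proposal is correct and follows essentially the same route as the paper's proof: the same decomposition of the Toeplitz extension into the split extension $0\to\mathcal{T}_0\to\mathcal{T}\to\mathbb{C}\to 0$ and the extension of $S\mathbb{C}$ by $\mathcal{K}$, the same bootstrapping of Proposition \ref{prop: toeplitzisawesome} to all $K_{n}$ via $\mathcal{T}\overline{\otimes}SA\simeq S(\mathcal{T}\overline{\otimes}A)$, and the same final step combining nuclearity, Morita invariance, and the vanishing of $K_{*}(\mathcal{T}_0\overline{\otimes}A)$. The only differences are cosmetic: you extract that vanishing from the direct-sum decomposition induced by the splitting $\iota\otimes 1$ where the paper reads it off the long exact sequence, and in the last step you should cite the long exact sequence \eqref{exseq} rather than the six-term sequence (whose cyclic form presupposes the periodicity being proved), although the index-type boundary map you actually use is already present there.
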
 
\begin{proof}
Consider the character $\chi$ defined in \eqref{eq:chi} and let $\mathcal{T}_{0}:=\ker \chi$, so that we have an extension
\[\xymatrix{0 \ar[r] & \mathcal{T}_0 \ar[r] & \mathcal{T} \ar[r] & \mathbb{C} \ar[r] & 0}.\]
As $\mathbb{C}$ is nuclear, this extension has the property that the induced sequence
\[\xymatrix{0 \ar[r] & \mathcal{T}_0\overline{\otimes} A \ar[r] & \mathcal{T}\overline{\otimes} A \ar[r] & A \ar[r] & 0},\]
is exact for any $C^{*}$-algebra $A$ as well, by Lemma \ref{exactextension}.

The long exact sequence \eqref{exseq}, together with the fact that $S(A\overline{\otimes}B)\simeq A\overline{\otimes}SB$ and Proposition \ref{prop: toeplitzisawesome}, imply that $\chi_{*}:K_{n}(\mathcal{T}\overline{\otimes} A)\to K_{n}(A)$ is an isomorphism for all $n$. Consequently $K_{n}(\mathcal{T}_{0}\overline{\otimes}A)=0$ for all $n$. Now observe that, after identifying $\ker \textnormal{ev}_{1}$ with $C_{0}(0,1)$, we can construct a second extension
\[\xymatrix{0 \ar[r] & \mathcal{K}\ar[r] & \mathcal{T}_{0}\ar[r] & C_{0}(0,1) \ar[r] & 0}.\]
 As $C_0(0,1)$ is nuclear, this extension, too, has the property that
\[\xymatrix{0 \ar[r] & \mathcal{K}\overline{\otimes} A \ar[r] & \mathcal{T}_{0}\overline{\otimes} A \ar[r] & C_{0}(0,1)\overline{\otimes}A \ar[r] & 0}\]
is exact for any $C^{*}$-algebra $A$, by Lemma \ref{exactextension}. Since $C_{0}(0,1)\overline{\otimes}A\simeq SA$, the long exact sequence \eqref{exseq} gives an isomorphism
\[K_{n+1}(C(0,1)\overline{\otimes}A)\xrightarrow{\sim} K_n(\mathcal{K}\overline{\otimes}A).\]
Now we use the Morita invariance isomorphism $K_{n}(\mathcal{K}\overline{\otimes}A)\simeq K_{n}(A)$ and the fact that $C(0,1)\overline{\otimes}A\simeq SA$ to deduce that
\[K_{n+2}(A)\simeq K_{n+1}(C(0,1)\overline{\otimes}A)\xrightarrow{\sim} K_n(\mathcal{K}\overline{\otimes}A)\simeq K_{0}(A),\]
which yields the Bott periodicity isomorphism.
\end{proof}
We remark that, in fact, the theorem holds if we replace $K$ by any functor that is homotopy invariant, half-exact and Morita invariant.
\subsection{Toeplitz extensions and bivariant $K$-theory}
As we have seen so far in the Toeplitz index and Bott periodicity theorems, extensions of $C^{*}$-algebras play a crucial role in $K$-theory and henceforth in index theory. An extension of a C*-algebra $A$ by $B$ should be viewed as a new $C^{*}$-algebra, built by "gluing together" $A$ and $B$ in a possibly topologically nontrivial way.

In \cite{BDF77}, Brown, Douglas, and Fillmore initiated the study of extensions by considering exact sequences of the form
\[\xymatrix{0 \ar[r] & \mathcal{K}(H) \ar[r] & E \ar[r]^{} & C(M) \ar[r] & 0,}\]
for some Hilbert space $H$ and some compact Hausdorff topological space $M$. They proved that such extensions form an Abelian group by defining addition via an appropriate version of the Baer sum. They also showed that their Abelian group is dual to $K$-theory in a precise sense governed by Fredholm index theory.

 Kasparov generalized this construction to extensions
\[\xymatrix{0 \ar[r] & \mathcal{K}(X) \ar[r] & E \ar[r]^{} & A\ar[r] & 0,}\]
where $A$ is a separable $C^{*}$-algebra and $X$ a countably generated Hilbert $C^{*}$-module over a second, $\sigma$-unital $C^{*}$-algebra $B$. A technical assumption on such extensions 
is that they admit a completely positive and completely contractive linear splitting $\ell:A\to E$ such that $\ell\circ\pi=\textnormal{id}_{A}$.  This assumption is automatically satisfied when the quotient algebra in the extension is nuclear. Commutative $C^*$-algebras are nuclear, and thus  the Toeplitz extensions discussed previously satisfy this assumption. The isomorphism classes of such extensions form an Abelian group $\Ext^{1}(A,B)$ which is isomorphic to the Kasparov group $KK_{1}(A,B)$. This section is devoted to making this statement more precise. An excellent reference for this discussion is \cite[Chapter 3]{JenTho}.

\subsubsection{Hilbert modules and \Cs correspondences}
\label{sss:hilb}
Before we proceed, we need to recall some results from the theory of Hilbert \Cs modules. For more details on the latter, we refer the interested reader to the monograph \cite{La95}.
\begin{defn}
 A \emph{pre-Hilbert module} over $B$ is a right $B$-module $X$ with a $B$-valued Hermitian product, i.e. a map $\inn{}{\cdot,\cdot}{B}: X\times X \rightarrow B$ satisfying
\[\begin{aligned}
&  \inn{}{\xi,\eta}{B}&=& \langle \eta,\xi \rangle_{B}^*, \quad &  \inn{}{\xi, \eta b}{B} &= \inn{}{\xi,\eta}{B} b , \\&  \inn{}{\xi,\xi}{B} &\geq & \,0,\quad  
&  \inn{}{\xi, \xi}{B} &= 0 \Leftrightarrow \xi =0,
 \end{aligned}\]
for all $\xi, \eta \in X$ and for all $b \in B$. 
\end{defn}
For a pre-Hilbert module $X$, one can define a scalar valued norm $\| \cdot \|$ using the \Cs norm on $B$:
\begin{equation}
\label{Hilbnorm}
 \| \xi \|^2 = \| \inn{}{\xi,\xi}{B} \|_B.
\end{equation}

\begin{defn}
A \emph{Hilbert \Cs module} is a pre-Hilbert module that is complete in the norm \eqref{Hilbnorm}.
\end{defn}
If one defines $\inn{}{X,X}{}$ to be the linear span of elements of the form $\inn{}{\xi, \eta}{}$ for $\xi, \eta \in X$, then its closure its a two-sided ideal in $B$. We say that the Hilbert module $X$ is \emph{full} whenever $\inn{}{X,X}{}$ is dense in $B$.

Let now $X, Y$ be two Hilbert \Cs modules over the same \Cs algebra $B$.
\begin{defn}
 A map $T: X \rightarrow Y$ is said to be an \emph{adjointable operator} if there exists another map $T^*: Y\rightarrow X$ with the property that 
$$
 \inn{}{T\xi, \eta}{} = \inn{}{\xi, T^*\eta}{} \qquad \textup{for all} \quad \xi \in X, \eta \in Y\; .
$$
\end{defn}

Every adjointable operator is automatically right $B$-linear and bounded. However, the converse is in general not true: a bounded linear map between Hilbert modules need not be adjointable. 

We denote the collection of adjointable operators from $X$ to $Y$ by $\End^{*}_{B}(X,Y)$. When $X=Y$, the adjointable operators form a \Cs algebra in the operator norm, that is denoted by $\End^{*}_{B}(X)$.

Inside the adjointable operators one can single out a particular subspace, which is analogous to that of finite-rank operators on a Hilbert space.  More precisely, for every $\xi \in Y, \eta \in X$ one defines the operator $\theta_{\xi, \eta} : X \rightarrow Y$ as
\begin{equation}
\theta_{\xi, \eta} (\zeta) =  \xi  \inn{}{\eta, \zeta}{}, \qquad \forall \zeta \in X
\end{equation}
This is an adjointable operator, with adjoint $\theta_{\xi, \eta}^* : Y \rightarrow X$ given by $\theta_{\eta, \xi}$.

We denote by $\mathcal{K}_B(X,Y)$ the closure of the linear span of 
\begin{equation} 
 \lbrace \theta_{\xi, \eta} \, \vert \, \xi, \eta \in X \rbrace \subseteq \End^{*}_{B}(X,Y),
\end{equation}
and we refer to it as the space of \emph{compact adjointable operators}.

In particular $\mathcal{K}_B(X):= \mathcal{K}_B(X,X) \subseteq \End^{*}_{B}(X)$ is a closed two-sided ideal in the $C^{*}$-algebra $\End^{*}_{B}(X)$, hence a \Cs subalgebra, whose elements are referred to as \emph{compact endomorphisms}. Elements of $\mathcal{K}_B(X)$ and of $\End^*_B(X)$ act on $X$ from the left, motivating the following:
\begin{defn}
\label{d:c*corr}
A \emph{\Cs correspondence $(X,\phi)$ from $A$ to $B$},  is a right Hilbert $B$-module $X$ endowed with a $*$-homomorphism $\phi: A\to \End^{*}_{B}(X)$.  If $\phi:A\to\mathcal{K}_B(X)$ we refer to $(X,\phi)$ as a \emph{compact} $C^{*}$-correspondence and in case $A=B$ we refer to $(X, \phi)$ as a \Cs correspondence \emph{over $B$}. 
\end{defn}
\noindent When no confusion arises, we will omit the map $\phi$ and simply write $X$.

Two \Cs correspondences $X_{\phi}$ and $Y_{\psi}$ over the same algebra $B$ are called \emph{isomorphic} if and only if there exists a unitary $U \in \End^{*}_{B}(X,Y)$ intertwining $\phi$ and $\psi$.

Given an $(A,B)$-correspondence $X_\phi$ and a $(B,C)$-correspondence $Y_{\psi}$, one can construct an $(A,C)$-correspondence, named the \emph{interior tensor product} of $X_{\phi}$ and $Y_{\psi}$. 

As a first step, one constructs the \emph{balanced tensor product} $X \otimes_{B} Y$ which is a quotient of the algebraic tensor product $X \otimes_{\mathrm{alg}} Y$ by the subspace generated by elements of the form
\begin{equation}
 \label{ns}
  \xi b\otimes \eta - \xi \otimes \psi(b) \eta, 
  \end{equation}
 for all $\xi \in X,\ \eta \in Y,\ b \in B$.
 
 This has a natural structure of right module over $C$ given by
\[(\xi \otimes \eta)c = \xi \otimes (\eta c),\]
and a $C$-valued inner product defined on simple tensors as
\begin{equation}\label{eq:inn}
\inn{}{\xi_1 \otimes \eta_1 , \xi_2 \otimes \eta_2}{C} := \inn{}{\eta_1, {\psi(\inn{}{\xi_1,\xi_2}{B}) \eta }}{C} ,
\end{equation}
and extended by linearity.

The inner product is well-defined (cf. \cite[Proposition 4.5]{La95}); in particular, the null space 
$N=\{\zeta \in X\otimes_{\mathrm{alg}} Y \, ;\,  \inn{}{\zeta, \eta}{} = 0\}$ 
can be shown to coincide with the subspace generated by elements of the form in \eqref{ns}.

One then defines $X \hat{\otimes}_\psi Y$ to be the right Hilbert module obtained by completing $X \otimes_B Y $ in the norm induced by \eqref{eq:inn}.

Moreover for every $T \in \End^*_B(X)$, the operator defined on simple tensors by 
\[ \xi \otimes \eta \mapsto T(\xi) \otimes \eta
\]
extends to a well-defined operator $\phi_{*}(T) := T \otimes \id$. It is adjointable with adjoint given by $T^* \otimes \id = \phi_{*}(T^*)$. In particular, this means that there is a left action of $A$ defined on simple tensors by 
\[(\phi \otimes_{\psi} \id)(a)(\xi \otimes \eta) = \phi(a)\xi \otimes \eta,\]
and extended by linearity to a map \[
 \phi \otimes_{\psi} \id : A \rightarrow  \End^*_C(X\hot_{\psi}Y),\]
 thus turning $X \hot_{\psi} Y$ into an $(A,C)$-correspondence. For all the details, we refer the reader once more to \cite[Chapter 4]{La95}.

We remark that the interior tensor product induces an associative operation on isomorphism classes of \Cs correspondences.

\subsubsection{Kasparov modules and the theory of extensions}
We now come to defining the key objects in Kasparov's bivariant $K$-theory \cite{Kas80Ext}, which are inspired by the geometry of elliptic operators on manifolds.
\begin{defn}
An \emph{odd} Kasparov $(A,B)$-bimodule is a pair $(Y,F)$ where $Y=Y_{\phi}$ is a Hilbert $C^{*}$-correspondence from $A$ to $B$, and 
$F\in\End^{*}_{B}(Y)$ is a self-adjoint operator such that $F^{2}=1$ and $[F,a]\in\mathcal{K}(Y)$. An \emph{even} Kasparov module is a triple $(Y,F,\gamma)$ 
such that $(Y,F)$ is an odd Kasparov module and $\gamma\in\End^{*}_{B}(Y)$ is a self-adjoint unitary that commutes with $A$ and anticommutes with $F$.
\end{defn}
The natural equivalence relation of homotopy of Kasparov modules is conveniently defined via Kasparov modules for $(A,C([0,1],B))$. The homotopy classes
of odd Kasparov $(A,B)$-modules form an Abelian group denoted $KK_{1}(A,B)$. Similarly, the homotopy classes of even Kasparov modules form 
an Abelian group $KK_{0}(A,B)$. If we choose $A=\mathbb{C}$ then there are natural isomorphisms $KK_{*}(\mathbb{C},B)\simeq K_{*}(B)$, and as such
$KK$-theory generalises $K$-theory. The main feature of the theory is the existence of an associative, bilinear product structure
\begin{equation}
\label{eq: Kasprod}
KK_{i}(A,B)\times KK_{j}(B,C)\to KK_{i+j}(A,C),
\end{equation}
the \emph{Kasparov product}. Again, if we set $A=\mathbb{C}$, we see that elements in $KK_{j}(B,C)$ induce maps 
$K_{*}(B)\to K_{*+j}(C)$ by taking products from the right.

There is a close relationship between the Abelian groups $KK_{1}(A,B)$ and \linebreak $\Ext^{1}(A,B)$
which can be understood via the following \emph{Kasparov--Stinespring theorem}, first proved in \cite{Kas80St}.
\begin{thm}[see the proof of Theorem 3.2.7 in \cite{JenTho}]
\label{Stinespring}
Let $A,B$ be $C^{*}$-algebras, with $A$ separable and $B$ $\sigma$-unital. Let $X$ be a countably generated Hilbert $C^{*}$-module over $B$ and $\rho:A\to \End^{*}_{B}(X)$ be a  completely positive contraction. There exists a countably generated Hilbert $C^{*}$-module $Y$ over $B$, a $*$-homomorphism $\pi:A\to \End^{*}_{B}(Y)$ and an isometry $v:X\to Y$ such that $\rho(a)=v^{*}\pi(a)v$.
\end{thm}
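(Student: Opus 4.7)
The plan is to adapt the classical Stinespring dilation construction to the Hilbert $C^{*}$-module setting, obtaining $Y$ as the completion of a quotient of the algebraic tensor product $A \otimes_{\mathrm{alg}} X$ equipped with a $B$-valued form built from $\rho$.

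First, I would reduce to the unital case. If $A$ is non-unital, pass to the unitisation $A^{+}$ and extend $\rho$ to the unital completely positive map $\tilde\rho:A^{+}\to\End^{*}_{B}(X)$ defined by $\tilde\rho(a+\lambda 1):=\rho(a)+\lambda\cdot\mathrm{id}_{X}$. This extension is again completely positive precisely because $\rho$ is contractive, and any Stinespring dilation of $\tilde\rho$ restricts to one of $\rho$. Separability is preserved by unitisation, so I may assume $A$ is unital and $\rho$ unital and cp.

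Next, on $A\otimes_{\mathrm{alg}} X$ I would define the $B$-valued sesquilinear form
\[\langle a_{1}\otimes x_{1},\,a_{2}\otimes x_{2}\rangle:=\langle x_{1},\rho(a_{1}^{*}a_{2})x_{2}\rangle_{B},\]
extended sesquilinearly. Complete positivity of $\rho$ is precisely the statement that for every finite tuple $(a_{1},\ldots,a_{n})$ the matrix $[\rho(a_{i}^{*}a_{j})]$ is positive in $M_{n}(\End^{*}_{B}(X))$, and pairing with the column $(x_{1},\ldots,x_{n})\in X^{n}$ gives positivity of the above form in $B$. Modding out the null space $N:=\{\zeta:\langle\zeta,\zeta\rangle=0\}$ and completing in the norm $\|\zeta+N\|^{2}=\|\langle\zeta,\zeta\rangle\|$ yields a Hilbert $B$-module $Y$. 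Countable generation of $Y$ will follow from separability of $A$ and countable generation of $X$: if $\{a_{n}\}\subset A$ is dense and $\{x_{m}\}$ generates $X$, then $\{a_{n}\otimes x_{m}+N\}$ generates $Y$.

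I would then define $\pi(a)$ on elementary tensors by left multiplication on the first factor, $\pi(a)(b\otimes x):=ab\otimes x$, and $v(x):=1\otimes x+N$. Boundedness, well-definedness on the quotient, and the $*$-homomorphism property of $\pi$ follow by applying the amplified map $\rho^{(n)}$ to the matrix inequality $[b_{i}^{*}a^{*}ab_{j}]\le\|a\|^{2}[b_{i}^{*}b_{j}]$ in $M_{n}(A)$, which propagates to $\|\pi(a)\zeta\|^{2}\le\|a\|^{2}\|\zeta\|^{2}$ on finite sums. A direct computation gives $\pi(a)^{*}=\pi(a^{*})$, so $\pi$ lands in $\End^{*}_{B}(Y)$. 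Unitality of $\rho$ yields $\|v(x)\|^{2}=\|\langle x,\rho(1)x\rangle_{B}\|=\|x\|^{2}$, so $v$ is isometric; adjointability and the formula $v^{*}(b\otimes x+N)=\rho(b)x$ are read off from the identity $\langle v(y),b\otimes x+N\rangle=\langle y,\rho(b)x\rangle_{B}$, and together yield $v^{*}\pi(a)v=\rho(a)$ after restricting back to $A$ in the non-unital case.

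The main technical obstacle is establishing positivity of the sesquilinear form: this is the only step where the full strength of complete positivity, rather than mere positivity, of $\rho$ enters in an essential way, and the same matrix-amplification device is what makes $\pi(a)$ descend to an adjointable bounded operator on $Y$ instead of merely a bounded linear map. Consequently the whole construction hinges on the matrix-positivity interpretation of complete positivity; once this is established, all remaining verifications are formal.
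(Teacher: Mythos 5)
Your proposal is correct and is exactly the standard KSGNS/Stinespring construction that the paper itself defers to (it cites the proof of Theorem 3.2.7 in Jensen--Thomsen rather than supplying one): complete $A\otimes_{\mathrm{alg}}X$ under the form $\langle a_{1}\otimes x_{1},a_{2}\otimes x_{2}\rangle=\langle x_{1},\rho(a_{1}^{*}a_{2})x_{2}\rangle_{B}$, let $\pi$ act by left multiplication, and set $v(x)=1\otimes x$. The one point to tighten is the reduction step: you should pass to the forced unitisation $A^{+}=A\oplus\mathbb{C}$ even when $A$ already has a unit, since for unital $A$ with $\rho(1_{A})\neq\mathrm{id}_{X}$ the map $x\mapsto 1_{A}\otimes x$ is only a contraction rather than an isometry, whereas adjoining a new unit and extending by $\tilde\rho(a+\lambda 1):=\rho(a)+\lambda\,\mathrm{id}_{X}$ (completely positive precisely because $\rho$ is completely contractive) always yields a unital dilation problem.
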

It is worth noting that such an isometry $v:X\to Y$ immediately gives rise to a Toeplitz type algebra 
\[\mathcal{T}_{v}:=vv^{*}\End_{B}^{*}(Y)vv^{*}\simeq \End_{B}^{*}(X).  \]
To an extension with
a completely positive linear splitting $\ell:A\to E$ we can associate an odd Kasparov module by observing that, as $\mathcal{K}(X)$ is an ideal in $E$, there is a $*$-homomorphism $\varphi:E\to \End^{*}_{B}(X)$. We consider the completely positive contraction $\rho:=\varphi\circ\ell:A\to \End^{*}_{B}(X)$ and  obtain an $(A,B)$-bimodule $Y$ and an isometry $v:X\to Y$ via Theorem \ref{Stinespring}.
\begin{thm}
Let $X$ be a countably generated Hilbert $C^{*}$-module over the $\sigma$-unital $C^{*}$-algebra $B$ and $A$ a separable $C^{*}$-algebra. If
\[\xymatrix{0 \ar[r] & \mathcal{K}(X) \ar[r] & E \ar[r]^{} & A\ar[r] & 0,}\]
is a semisplit extension with completely contractive and completely positive linear splitting $\ell:A\to E$, then the Stinespring dilation $v:X\to Y$ of $\rho:=\varphi\circ\ell:A\to \End^{*}_{B}(X)$ makes $(Y, 2vv^{*}-1)$ into an 
odd Kasparov module for $(A,B)$.
\end{thm}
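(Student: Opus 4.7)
The plan is to check the three defining properties of an odd Kasparov module directly. Writing $p := vv^{*} \in \End^{*}_{B}(Y)$, the self-adjointness of $F = 2p - 1$ and the identity $F^{2} = 1$ are both immediate consequences of $v^{*}v = 1_{X}$: the first because $p$ is then a self-adjoint projection, the second by expanding $(2p-1)^{2} = 4p^{2} - 4p + 1 = 1$. The substantive content is therefore the compactness of $[F, \pi(a)] = 2[p, \pi(a)]$ for every $a \in A$.

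The key algebraic input I would extract from the hypotheses is that $\rho$ is multiplicative and $*$-preserving \emph{modulo compact endomorphisms}. Since $\ell$ is a linear section of the quotient map $E \to A$, the defect $\ell(ab) - \ell(a)\ell(b)$ lies in the kernel of that quotient, which is $\mathcal{K}(X)$; applying the $*$-homomorphism $\varphi$ (whose restriction to $\mathcal{K}(X) \subset E$ is the identity inclusion $\mathcal{K}(X) \hookrightarrow \End^{*}_{B}(X)$) yields
\[ \rho(ab) - \rho(a)\rho(b) \in \mathcal{K}(X) \subset \End^{*}_{B}(X). \]
Because $\ell$ is completely positive it is also $*$-linear, so $\rho(a^{*}) = \rho(a)^{*}$. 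I would then set $T := (1 - p)\pi(a)p$ and compute, using the Stinespring identity $v^{*}\pi(a)v = \rho(a)$ together with $p\pi(a)p = v\rho(a)v^{*}$,
\[ T^{*}T = p\pi(a^{*}a)p - p\pi(a^{*})p\pi(a)p = v\bigl(\rho(a^{*}a) - \rho(a^{*})\rho(a)\bigr)v^{*}. \]
Since $v\theta_{\xi,\eta}v^{*} = \theta_{v\xi, v\eta}$, conjugation by $v$ sends $\mathcal{K}(X)$ into $\mathcal{K}(Y)$, and hence $T^{*}T \in \mathcal{K}(Y)$.

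The final step, and the main obstacle, is the implication $T^{*}T \in \mathcal{K}(Y) \Rightarrow T \in \mathcal{K}(Y)$. This is the standard ideal argument: if $a^{*}a$ lies in a closed two-sided ideal $I$ of a $C^{*}$-algebra, then $(a^{*}a)^{1/2} \in I$ by continuous functional calculus, and multiplying by an approximate unit of $I$ shows that $a \in I$. Applied to $I = \mathcal{K}(Y) \triangleleft \End^{*}_{B}(Y)$, this delivers $T \in \mathcal{K}(Y)$; taking adjoints and letting $a$ range over $A$ gives $p\pi(a)(1-p) \in \mathcal{K}(Y)$ as well. Decomposing $[p, \pi(a)] = p\pi(a)(1-p) - (1-p)\pi(a)p$ then concludes the argument. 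Everything else is algebraic manipulation once the "multiplicative modulo $\mathcal{K}(X)$" property of $\rho$ is identified as the correct consequence of $\ell$ being a completely positive splitting of a semisplit extension.
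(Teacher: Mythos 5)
Your proposal is correct and follows essentially the same route as the paper: both reduce compactness of $[vv^{*},\pi(a)]$ to showing that a product of the form $T^{*}T$ (resp.\ $TT^{*}$) lands in $v\mathcal{K}(X)v^{*}\subset\mathcal{K}(Y)$, using that the splitting is multiplicative modulo $\mathcal{K}(X)$ and that an element of a $C^{*}$-algebra lies in a closed two-sided ideal iff its ``square'' does. The only cosmetic difference is that you work with $(1-p)\pi(a)p$ and $T^{*}T$ where the paper uses $p\pi(a)(1-p)$ and $TT^{*}$, and you spell out the ideal lemma that the paper cites.
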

\begin{proof}
As $Y$ is an $(A,B)$-correspondence and $F=2vv^{*}-1$ it holds that $F^{2}=1$ and $F^{*}=F$. 
Hence all we need to check is that $[F,\pi(a)]=2[vv^{*},\pi(a)]$ is an element of $\mathcal{K}(Y)$. Write $p=vv^{*}$, so $p^{2}=p^{*}=p$ and 
\[[p,\pi(a)]=p\pi(a)(1-p)-(1-p)\pi(a)p.\]
It thus suffices to show that $p\pi(a)(1-p)\pi(a)^{*}p\in \mathcal{K}(Y)$, for $\mathcal{K}(Y)$ is an ideal in $\End^{*}_{B}(Y)$ and thus
for $T\in\End^{*}_{B}(Y)$ it holds that $T\in\mathcal{K}(Y)$ if and only if $TT^{*}\in\mathcal{K}(Y)$ (see for instance  \cite[Proposition II.5.1.1.ii]{Bl06}). Now
$v\mathcal{K}(X)v^{*}\subset \mathcal{K}(Y)$ since for $x_{1},x_{2}\in X$ it holds that $v \theta_{x_{1}, x_{2}}v^{*}= \theta_{v(x_{1}),v(x_{2})}$, and we compute
\begin{align*}
p\pi(a)(1-p)\pi(a^{*})p &=vv^{*}\pi(a)(1-vv^{*})\pi(a^{*})vv^{*}\\
&=v(v^{*}\pi(a)vv^{*}\pi(a^{*})v-v^{*}\pi(aa^{*})v)v^{*}\\
&=v(\ell(a)\ell(a^{*})-\ell(aa^{*}))v^{*}\in v\mathcal{K}(X)v^{*}.
\end{align*}
This proves that $(Y,F)$ is a Kasparov module.
\end{proof}
By the previous theorem, we see that an extension of $C^{*}$-algebras induces an element in $KK_{1}(A,B)$. Using the product structure \eqref{eq: Kasprod}, 
this leads to the elegant viewpoint that an extension induces maps
\[\otimes_{A}[(Y,F)]: K_{*}(A)\to K_{*+1}(B),\]
via the Kasparov product. These maps coincide with the boundary maps in the long exact sequence associated to the extension. For instance, the product
with the extension
\[\xymatrix{0 \ar[r] & \mathcal{K}\overline{\otimes} A \ar[r] & \mathcal{T}_{0}\overline{\otimes} A \ar[r] & C_{0}(0,1)\overline{\otimes}A \ar[r] & 0},\]
of the previous section induces the Bott periodicity isomorphisms $K_{n}(S^{2}A)\simeq K_{n}(A)$. In fact, the extension above, in combination with the Kasparov product,
can be used to prove the general bivariant Bott periodicity isomorphisms
\[KK_{*}(S^{2}A,B)\simeq KK_{*}(A,B)\simeq KK_{*}(A,S^{2}B), \]
for any pair of separable $C^{*}$-algebras $(A,B)$.

The Kasparov--Stinespring construction can be inverted up to homotopy, yielding the statement that $KK_{1}(A,B)$ is isomorphic to $\Ext^{1}(A,B)$. 
Effectively, this amounts to the observation that $KK$-theory is nothing but the study of extensions of $C^{*}$-algebras.

To conclude, let us sketch the inverse construction. An odd Kasparov module $(X,F)$ for $(A,B)$ defines an adjointable projection $P:=\frac{1}{2}(F+1)$ and hence a complemented submodule $X:=PY\subset Y$. The $C^{*}$-subalgebra
\[E:=\left\{(PTP,a)\in \End^{*}_{B}(X)\oplus A: T\in\End^{*}_{B}(Y),\quad P(T-a)P\in\mathcal{K}(Y)\right\},\]
of $\End^{*}_{B}(Y)\oplus A$
is an extension of $A$ by $\mathcal{K}(X)$. 
To see that $E$ is closed under products, we use that
\begin{align*}
PSPTP-PabP&=P(S-a)PTP+PaP(T-b)P-Pa(1-P)bP\\
&=P(S-a)PTP+PaP(T-b)P-[P,a](1-P)bP,
\end{align*}
which is an element of $\mathcal{K}(X)$. It admits the completely contractive linear splitting 
\[\ell: A\to E,\quad \ell:a\mapsto (PaP,a),\] 
and the inclusion $\mathcal{K}(X)=\mathcal{K}(PY)\to E$ defined by $T\mapsto (T,0)$ and the quotient map $(PTP,a)\to a$ with kernel $\mathcal{K}(X)$.
The $C^{*}$-algebra $E$ can be viewed as an \emph{abstract Toeplitz algebra} associated to the Kasparov module $(Y,F)$. This inverts the Kasparov--Stinespring construction, as is easily checked.
\color{black}

\section{Toeplitz algebras, crossed products by the integers, and Cuntz--Pimsner algebras}
\label{sec:ToePim}
We will now describe two constructions of Toeplitz \Cs algebras and quotients thereof that appear in the study of solid state systems, as they provide the natural framework for implementing the bulk-edge correspondence.
\subsection{Crossed products by the integers and the Pimsner--Voiculescu Toe\-plitz algebra}
Our first object of study are crossed products by the integers. They constitute one of the simplest and most well-understood examples of \Cs dynamical systems, a class of objects which were introduced to study group actions on \Cs algebras.

Let $B$ be a unital \Cs algebra, and $\alpha \in \Aut(B)$ a single automorphism. This defines an action of the additive group $\mathbb{Z}$ of integers on $B$ given by
\[\Z \to \Aut(B), \quad n \mapsto \alpha^n.
\] The crossed product \Cs algebra $B \rtimes_{\alpha}\Z$ is realised as the universal \Cs algebra generated by $B$ and a unitary $u$ satisfying the covariance condition 
\[ \alpha^n(b)=  u^n b u^{*n}, \quad \forall b \in B, n \in \Z.\]
As described in \cite{PiVo80}, crossed products by a single automorphism can be realised as quotients in a Toeplitz exact sequence of \Cs algebras, constructed starting from the Toeplitz extension \eqref{eq:Toe_ext}.

\begin{defn}
Let $B$ a unital \Cs algebra and $\alpha$ an automorphism and let $\mathcal{T}= C^*(T)$ be the Toeplitz algbera of the unilateral shift. The \emph{Pimsner--Voiculescu Toeplitz algebra} $\mathcal{T}(B,\alpha)$ is defined as the \Cs subalgebra of $B \overline{\otimes} \mathcal{T}$ generated by $B \otimes 1$ and $u \otimes T$.
\end{defn}

The Pimsner--Voiculescu Toeplitz algebra $\mathcal{T}(B,\alpha)$ and the crossed product \Cs algebra $B\rtimes_{\alpha}\Z$ fit into a short exact sequence involving the stabilisation of $B$:
 \beq
 \label{eq:ToeCProd}
\xymatrix{ 0 \ar[r] &  \mathcal{K}\overline{\otimes} B\ar[r] & \mathcal{T}(B,\alpha) \ar[r] & B \rtimes_{\alpha} \Z \ar[r] &0.
}
 \eeq
 Proof of exactness of the above sequence follows after tensoring the Toeplitz exact sequence \eqref{eq:Toe_ext} with the algebra $B$, using nuclearity of $C(S^1)$ together with Lemma~\ref{exactextension}, and by realising $B\rtimes_{\alpha}\Z$ as a subalgebra of $B \overline{\otimes} C(S^1)$ (see \cite[Section 2]{PiVo80}).
 
The Pimsner--Voiculescu Toeplitz algebra $\mathcal{T}(B,\alpha)$ is $KK$-equivalent to the algebra $B$ itself. The exact sequence \eqref{eq:ToeCProd} then induces six-term exact sequences that allow for an elegant computation of the $K$-theory and $K$-homology groups of the crossed product algebra $B \rtimes_{\alpha} \Z$ in terms of those of the algebra $B$. These exact sequences are a special case of those described in Subsection \ref{sec: exseq}.

\subsection{Pimsner's construction: universal \Cs algebras from \Cs correspon\- dences}
The construction which we shall describe now generalises that of crossed products by the integers. In \cite{Pi97}, starting from a \Cs correspondence $(X,\phi)$, Pimsner constructed two \Cs algebras $\mathcal{T}_X$ and $\mathcal{O}_X$, which are now referred to as the \emph{Toeplitz algebra} and the \emph{Cuntz--Pimsner algebra} of the pair $(X,\phi)$, respectively. Both algebras are characterized by universal properties and depend only on the isomorphism class of the pair $(X,\phi)$. We will describe the construction for compact correspondences.

\subsubsection{The Toeplitz algebra}
\label{ss:PimBimod}
As one can take balanced tensor products of \Cs corre\- spondences, as described in \ref{sss:hilb}, we consider the modules
\beq
\label{eq:Epowerk}
 X^{(k)} := X^{{\hot}^k_{\phi}} \quad k>0,
 \eeq and we take the infinite direct sum
\begin{equation}
 F_X= B \oplus \bigoplus_{k=1}^{\infty} X^{(k)},
\end{equation}
which is referred to as the \emph{(positive) Fock correspondence} associated to the correspondence $(X, \phi)$.

One can naturally associate to any element $\xi \in X$ a shift map:
\begin{equation}
 T_\xi (\xi_1 \ot \dots \ot \xi_k) = \xi \ot \xi_1 \ot \dots \ot \xi_k, \qquad T_\xi (b)= \xi b.
\end{equation}
This is an adjointable operator on $F_X$, with adjoint
\begin{equation}
 T^*_\xi (\xi_1 \ot \dots \ot \xi_k) = \phi( \langle \xi, \xi_1 \rangle )  \xi_2 \ot \dots \ot \xi_k, \qquad T^{*}_{\xi}(b)=0.
\end{equation}
\begin{defn}
The Toeplitz algebra of the \Cs correspondence $X_{\phi}$ is the smallest \Cs subalgebra of $\BndB(F_{X})$ that contains all the $T_\xi$ for $\xi \in X$. 
\end{defn}
When $(X,\phi)$ is a compact $C^{*}$-correspondence, the compact operators on the Fock module sit inside $\mathcal{T}_E$ as a two-sided ideal, motivating the following:
\begin{defn}
The Cuntz--Pimsner algebra $\Pim$ of a compact \Cs correspondence $(X, \phi)$ is the quotient algebra appearing in the exact sequence
\begin{equation}
\label{eq:PimES}
 \xymatrix{0 \ar[r]& \mathcal{K}_B(F_X) \ar[r] & \mathcal{T}_{X} \ar[r]^{\pi} & \mathcal{O}_{X} \ar[r]& 0.}
\end{equation}
The image of an element $T_{\xi} \in \mathcal{T}_{X}$ under the quotient map $\pi$ will be denoted by $S_{\xi}$.
  \end{defn}  
  
Changing the ideal in the exact sequence \eqref{eq:PimES}, one can define the Cuntz--Pimsner algebra of a general (i.e. non-compact, and possibly non-injective) $C^{*}$-correspon\-dence. We will not be concerned with this more elaborate construction here. For details see \cite{Pi97, Kat04}

Many well-known and studied examples of \Cs algebras admit a description as Toeplitz--Pimsner and Cuntz--Pimsner algebras. The theory provides a unifying framework for a variety of examples, ranging from the study of discrete dynamics to more geometric situations.
 \begin{ex}
 \label{ex:Cuntz}
Let $B=\mathbb{C}$ and $X=\C^n$ and $\phi$ the left action by multiplication. If one chooses a basis for $\C^n$, then the Toeplitz algebra of $(X, \phi)$ is the universal \Cs algebras generated by $n$ isometries $V_1, \dots, V_n$ satisfying $\sum_{i} V_i V_i^* \leq 1$.

This yields the well known Toeplitz extension for the Cuntz algebras~$\mathcal{O}_n$:
  \[
\xymatrix{ 0 \ar[r] & \mathcal{K}(\mathcal{F}) \ar[r] & C^*(V_1, \dots, V_n) \ar[r] & \mathcal{O}_n \ar[r] &0,
} \]
where $\mathcal{F}$ is the full Fock space on $\C^n$. In particular, for $n=1$ one gets back the classical Toeplitz extension of \eqref{eq:Toe_ext}.
 \end{ex}

\begin{ex}[{cf. \cite[Section 2]{KPW98})}]
If the correspondence $X$ is a finitely generated and projective module over a unital \Cs algebras, the Pimsner algebra of $(X, \phi)$ can be realized explicitly in terms of generators and relations.
Indeed, since $X$ is finitely generated and projective, there exists a finite set $\lbrace \eta_j \rbrace_{j=1}^{n}$ of elements of $X$ such that 
\[\xi = \sum\nolimits_{j=1}^n  \eta_j \inn{}{\eta_j, \xi}{B}, \qquad \forall \xi \in X.
\]
Then, using the above formula, one can spell out the left $B$-action on $X$ as
$$
\phi(b) \eta_j= \sum_{j=1}^n \eta_i \inn{}{\eta_i, \phi(b) \eta_j}{B}, \quad \forall b \in B.
$$
The \mbox{\Cs algebra} $\Pim$ is then the universal \mbox{\Cs algebra} generated by $B$ together with 
$n$ operators $S_1, \dots, S_n$, satisfying
\begin{align}
& S_i^* S_j = \inn{}{\eta_i, \eta_j}{B}, \quad \sum\nolimits_j S_j S_j ^* = 1, \quad \mbox{and} \quad 
b S_j = \sum\nolimits_{i}S_i \inn{}{\eta_i, \phi(b) \eta_j}{B},
\end{align}
for $b \in B$, and $j=1,\dots, n$. The generators $S_i$ are partial isometries if and only if $\inn{}{\eta_i,\eta_j}{}=0$ for $i\neq j$. For $B=\C$ and $E$ a Hilbert space of dimension $n$, one recovers the Cuntz algebra $\mathcal{O}_n$ of Example \ref{ex:Cuntz}.
\end{ex}

\begin{ex}
\label{ex:CPZ}
Let $B$ be a \Cs algebra and $\alpha: B \rightarrow B$ an automorphism of $B$. Then $X=B$, seen as a module over itself, can be naturally made into a compact \Cs correspondence. 

The right Hilbert $B$-module structure is the standard one, with right $B$-valued inner product $\inn{}{a,b}{B} = a^* b$. The automorphism $\alpha $ is used to define the left action via $a \cdot b = \alpha(a) b $ and left $B$-valued inner product given by $\inn{B}{a,b}{} = \alpha(a^*b)$. 

Each module $X^{(k)} $ is isomorphic to $B$ as a right-module, with left action 
\begin{equation}
\label{autaction}
a \cdot (x_1 \ot \cdots \ot x_k) = \alpha^k(a) \alpha^{k-1}(x_1) \cdots \alpha(x_{k-1}) x_k.
\end{equation} 
The corresponding Pimsner algebra $\Pim$ coincides then with the crossed product algebra $B \rtimes_{\alpha} \Z$, while the Toeplitz algebra $\mathcal{T}_{X}$ agrees with the Toeplitz algebra $\mathcal{T}(B,\alpha)$. The extension \eqref{eq:PimES} then reduces to \eqref{eq:ToeCProd}.
\end{ex}

\subsubsection{Six-term exact sequences}
\label{sec: exseq}
As the Toeplitz extension \eqref{eq:PimES} is semi-split whenever the coefficient algebra $B$ is nuclear, it induces six-term exact sequences in $KK$-theory. These exact sequences can be simplified to a great extent after making the following observations:
\begin{itemize}
\item For a compact $C^{*}$-correspondence $(X,\phi)$, the triple$(X, \phi, 0)$ gives a well-defined even Kasparov module (with trivial grading), whose class we denote by $[X]$.
\item The ideal $\mathcal{K}(F_X)$ is naturally Morita equivalent to the algebra $B$ itself.
\item By \cite[Theorem 4.4.]{Pi97}, the Toeplitz algebra $\mathcal{T}_X$ is $KK$-equivalent to the coefficient algebra $B$.
\end{itemize}
In $K$-theory, the induced six-term exact sequence reads
\begin{equation}
\label{eq:6tesPim}
\xymatrix{
&K_0(B) \ar[r]^{\otimes(1 - [X])} & K_0(B) \ar[r]^{i_*} &K_0(\Pim) \ar[d]^{\partial} \\
&K_1(\Pim) \ar[u]^{\partial} &K_1(B)\ar[l]^{i_*} & K_1(B) \ar[l]^{\otimes(1 - [X])}},
\end{equation}
where $i_{*}$ is the map induced by the inclusion $B \hookrightarrow \Pim$ and the maps $\partial$ are connecting homomorphisms. Up to Morita equivalence, the latter can be computed as Kasparov products with the class of the extension \eqref{eq:PimES}. An unbounded representative for the extension class was constructed \cite{GMR18} in the setting bi-Hilbertian bimodules of finite Jones--Watatani index (cf. \cite{KPW04}), subject to some additional assumption.

We conclude this section by remarking that, in the case of a self-Morita equivalence bimodule---i.e., whenever $X$ is full and $\phi$ implements an isomorphism between $B$ and $\K_B(X)$---the exact sequence \eqref{eq:6tesPim} can be interpreted as a generalization of the classical \emph{Gysin sequence} in $K$-theory (see \cite[IV.1.13]{Ka78}) 
for the module of sections $E$ of a noncommutative line bundle. The Kasparov product with the map $1 - [X]$ can be interpreted as a \emph{noncommutative Euler class}. This analogy was exploited in \cite{AKL} to compute $K$-theory groups of algebras presenting a circle bundle structure.

\section{Applications to topological insulators}
\label{sec:SolidState}
We conclude by discussing the \emph{bulk-edge correspondence}, a principle in solid state physics, according to which one should be able to \emph{read} the topology of the bulk physical system from the effects it induces on boundary states. This principle underlies, for example, the quantization of the Hall current on the boundary of a sample of a quantum Hall system. 

In this section, we illustrate how Toeplitz extensions and the maps they induce in (bivariant) $K$-theory are essential for a mathematical understanding of these phenomena.

\subsection{The bulk-boundary correspondence for the one-dimensional {Su-- Sch\-rieffer--Heeger} model and the Noether--Gohberg--Krein index theorem}
We will now give an exposition of the key ideas behind the bulk-edge correspondence for the one-dimensional Su--Schrieffer--Heeger model \cite{SuSchHee}, a lattice model with chiral symmetry. Our main reference for this Subsection is \cite[Chapter 1]{ProSB}. 
On the Hilbert space $\C^2 \otimes \C^ n \otimes \ell^2(\Z)$ we consider the one dimensional Hamiltonian
\begin{equation}
\label{eq:SSHHam}
H:=\frac{1}{2}(\sigma_1+ i \sigma_2)\otimes \id_n \otimes U + \frac{1}{2}(\sigma_1- i \sigma_2)\otimes \id_n \otimes U^* +m \sigma_2 \otimes \id_n \otimes \id, 	
\end{equation}
where $\id_n$ and $\id$ are identity operators on $\C^n$ and $\C^2$, respectively, $m$ is a mass term, $U$ is the right shift on $\ell^2(\Z)$ defined in \eqref{eq:rightUshift}, and the $\sigma_{i}$ are the Pauli matrices 
\[\sigma_1=\begin{pmatrix} 0 & 1\\ 1&0\end{pmatrix}, \qquad \sigma_2=\begin{pmatrix} 0 &- i\\i & 0 \end{pmatrix}, \qquad \sigma_3 = \begin{pmatrix} 1 &0\\0 &-1 \end{pmatrix}.\]
This Hamiltonian goes back to work of \cite{SuSchHee} and models a conducting polymer, namely polyacetilene. It possess a chiral symmetry, implemented by the unitary operator
\[J= \sigma_3 \otimes \id_n \otimes \id,\]
i.e., $J^ *HJ=-H$.

The model has a spectral gap at $m=0$ so there exists $\varepsilon>0$ and a continuous function 
\[\chi:\mathbb{R}\to\mathbb{R},\quad \chi(x)=\left\{\begin{matrix} 0 &\textnormal{for } x\in &(-\infty, -\varepsilon]\\ 1 & \textnormal{for } x\in& [0,\infty), \end{matrix}\right.\]
so that we can form the \emph{Fermi projection} 
$P_F := \chi(H)$ through functional calculus with $\chi$.
 The projection $P_{F}$ satisfies the identity $JP_FJ=1-P_F$, so that the flat band Hamiltonian 
\[Q:=\id - 2P_F = \mathrm{sgn}(H)\]
satisfies again $J^*QJ=-Q$. Moreover, $Q^2=1$, hence its spectrum consists of the two isolated points $+1$ and $-1$, allowing us to write 
\[Q= \begin{pmatrix} 0 & U_F^*\\
 U_F & 0	
 \end{pmatrix}
 \]
for $U_F$ a unitary on $\C^n \otimes \ell^2(\Z).$ This unitary operator, called the \emph{Fermi unitary}, provides us with a natural topological invariant for the boundary system, the first odd Chern number, which can be computed as follows.

We use the discrete Fourier transform mentioned in \eqref{eq:Fourier} to write 
$\mathcal{F}Q\mathcal{F}^*$ as a direct integral $\int_{S^1}^{\oplus} Q_z \mathrm{d}_z$ where each of the $Q_k$'s has the form
\[Q_z=\begin{pmatrix}
	0 & U_z^* \\
	U_z & 0
\end{pmatrix}.\]
The family of unitary operators is differentiable and the first Chern class can be computed as the integral
\begin{equation}
\label{eq:FermiWN}
\mathrm{Ch}_1 (U_F) := \frac{i}{2\pi}\int_{S^1}^{\oplus} \tr(U_z \partial_z U_z) \mathrm{d}z  	
\end{equation}
This quantity is an invariant under \emph{small perturbations}.

\subsubsection{The bulk boundary correspondence}
We now introduce an edge for the Hamiltonian \eqref{eq:SSHHam} by restricting it to the Hilbert space $\C^2 \otimes \C^n \otimes \ell^2(\N)$ and imposing Dirichlet boundary conditions. The resulting Hamiltonian is 
\begin{equation}
\label{eq:SSHedgeHam}
\widehat{H}:=\frac{1}{2}(\sigma_1+ i \sigma_2)\otimes \id_n \otimes T + \frac{1}{2}(\sigma_1- i \sigma_2)\otimes \id_n \otimes T^* +m \sigma_2 \otimes \id_n \otimes \id, 	
\end{equation}
with conventions as above, and with $S$ the unilateral shift on $\ell^2(\N)$ described in Subsection~\ref{ss:shifts}.
Similarly to the bulk Hamiltonian, the edge Hamiltonian has a chiral symmetry implemented by the half-space chiral operator $\widehat{J}=\sigma_3 \otimes \id_n \otimes \id$. Moreover, it has a spectral gap at 0 that we denote by $\Delta$.

Let us now consider the Hilbert space obtained as the span of all the eigenvectors with eigenvalues in $[-\delta, \delta] \subset \Delta$, which we denote by $\mathcal{E}^{\delta}$. The chirality operator $\widehat{J}$ can be diagonalised on $\mathcal{E}^{\delta}$, and we have a splitting $\mathcal{E}^{\delta} = \mathcal{E}^\delta_{+} \oplus \mathcal{E}^{\delta}_{-}$.

The difference of the dimensions of the spaces $\mathcal{E}^{\delta}_{\pm}$ is the \emph{boundary invariant} of the system and it can be computed as a trace:
\[ \tr(\widehat{J} \hat{P}_{\delta}) = N_{+} - N_{-}, \qquad N_{\pm}= \dim \mathcal{E}^{\delta}_{\pm},\]
where $\hat{P}_{\delta} := \chi(\vert \hat{H} \vert \leq \delta)$ is the spectral projection. This invariant is independend of the choice of $\delta$, as long as it lies in the central gap.

The bulk-edge correspondence is contained in the following identity, that relates the bulk invariant (winding number of the Fermi unitary) to the boundary invariant we just introduced.
\begin{thm} [\!\!{\cite[Theorem 1.2.2]{ProSB}}]
Consider the Hamiltonian \eqref{eq:SSHHam} and its half-space restriction \eqref{eq:SSHedgeHam}. If $U_F$ is the Fermi unitary and $\mathrm{Ch}_1(U_F)$ its winding number defined in \eqref{eq:FermiWN}, then
\[  \mathrm{Ch}_1(U_F) = \mathrm{Tr}(\tilde{J}\tilde{P}(\delta)). \]
\end{thm}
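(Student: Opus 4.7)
The plan is to use the chiral symmetry to rewrite the right-hand side as a Fredholm index, recognise the associated operator as a Toeplitz operator, and invoke the Noether--Gohberg--Krein theorem. First, write $\hat H$ in the basis in which $\hat J = \sigma_{3}\otimes \id_{n}\otimes \id$ is diagonal, so that
\[
\hat H = \begin{pmatrix} 0 & \hat A^{*} \\ \hat A & 0 \end{pmatrix}, \qquad \hat A := \id_{n}\otimes(T^{*} + im\,\id),
\]
acts on $\C^{n}\otimes \ell^{2}(\N)$. Because $[-\delta,\delta]$ lies inside the spectral gap $\Delta$, the projection $\tilde P(\delta)$ coincides with the orthogonal projection onto $\ker \hat H = \ker\hat A \oplus \ker\hat A^{*}$, the two summands lying in the $+1$ and $-1$ eigenspaces of $\hat J$ respectively. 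Hence
\[
\Tr(\hat J\,\tilde P(\delta)) = \dim\ker\hat A - \dim\ker\hat A^{*} = \mathrm{Ind}(\hat A).
\]

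The second step identifies $\hat A$ with a Toeplitz operator. Since the tensor factor $\id_{n}$ decouples, it is enough to compute the index of the scalar operator $T^{*}+im\,\id$. Under the Fourier isomorphism $\ell^{2}(\N)\simeq H^{2}(S^{1})$ this operator is the Toeplitz operator $T_{f}$ with symbol $f(z):=\bar z + im$, which is continuous and nowhere vanishing on $S^{1}$ as long as $|m|\neq 1$. The Noether--Gohberg--Krein theorem then yields $\mathrm{Ind}(\hat A) = n\cdot \mathrm{Ind}(T_{f}) = -n\, w(f)$.

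To close the argument I would compare $w(f)$ with $\mathrm{Ch}_{1}(U_{F})$. Applying the Fourier transform to $H$ produces the Bloch family $H_{z} = \bigl(\begin{smallmatrix} 0 & z-im \\ \bar z+im & 0 \end{smallmatrix}\bigr)\otimes \id_{n}$, whose square is $(1+m^{2}-2m\,\mathrm{Im}\, z)\,\id$; the Fermi unitary is therefore $U_{F,z}=\tfrac{\bar z + im}{|\bar z+im|}\,\id_{n}$. Since the positive scalar $|\bar z+im|^{-1}$ is homotopic to $1$ through strictly positive functions, $U_{F}$ and the invertible $f\cdot\id_{n}$ define the same class in $K_{1}(C(S^{1})\otimes M_{n})$. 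The integral \eqref{eq:FermiWN} defining $\mathrm{Ch}_{1}$ is homotopy invariant, and a direct computation on a scalar unitary gives $\mathrm{Ch}_{1}(u\,\id_{n})=\pm n\,w(u)$; combining with the previous step yields the asserted identity once the signs in $\mathrm{sgn}(H)$, the extraction of $U_{F}$ and the normalisation of $\mathrm{Ch}_{1}$ are made compatible.

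The hard part is not analytical but a careful alignment of sign and orientation conventions: $\mathrm{sgn}(H)$ versus $1-2P_{F}$, the placement of $U_{F}$ in the off-diagonal of $Q$, the orientation of winding, and the normalisation of $\mathrm{Ch}_{1}$ each contribute factors of $\pm 1$ that must cancel correctly. A conceptually cleaner route, which largely sidesteps these ambiguities, is to cast the calculation $KK$-theoretically: $[U_{F}]\in K_{1}(C(S^{1})\otimes M_{n})$, and by the results of Section~\ref{s:toeKK} the boundary map of the matrix Toeplitz extension sends $[U_{F}]$ into $K_{0}(\mathcal{K}\otimes M_{n})\simeq \Z$; this boundary value equals the edge index $\Tr(\hat J\,\tilde P(\delta))$ on the one hand, and, through \eqref{eq:ToeBndry}, the Chern number $\mathrm{Ch}_{1}(U_{F})$ on the other.
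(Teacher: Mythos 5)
Your strategy---rewrite $\Tr(\hat J\,\tilde P(\delta))$ as the Fredholm index of the off-diagonal block $\hat A=\id_n\otimes(T^{*}+im)$, recognise $T^{*}+im$ as the Toeplitz operator with symbol $\bar z+im$, apply the Noether--Gohberg--Krein theorem, and match the resulting winding number with $\mathrm{Ch}_1(U_F)$---is precisely the index-theoretic reading of the identity that the paper sketches in the remark following the theorem; the paper itself offers no proof, deferring to \cite[Theorem 1.2.2]{ProSB}, and your closing $KK$-theoretic reformulation via the boundary map \eqref{eq:ToeBndry} is exactly the paper's stated point of view. So the approach is the intended one, and the computation of the Bloch family, of $U_{F,z}$, and of the index of $T_{\bar z+im}$ are all correct.

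Two points need tightening before this is a proof. First, $\tilde P(\delta)$ need not coincide with the projection onto $\ker\hat H$: the interval $[-\delta,\delta]$ may contain nonzero eigenvalues of the half-space Hamiltonian. The identity $\Tr(\hat J\,\tilde P(\delta))=\dim\ker\hat A-\dim\ker\hat A^{*}$ nevertheless holds because chiral symmetry pairs each eigenvalue $\lambda\neq 0$ in the gap with $-\lambda$, with $\hat J$ interchanging the two eigenspaces, so these contributions cancel in the trace; this short argument should be made explicit rather than asserting $\tilde P(\delta)=P_{\ker\hat H}$. Second, you defer the sign bookkeeping, but the theorem asserts an exact equality, not one up to sign, so the conventions must actually be pinned down; in doing so note that the source statement is internally inconsistent as written ($P_F=\chi(H)$ with $\chi$ supported on the positive half-line gives $\id-2P_F=-\mathrm{sgn}(H)$, and the integrand in \eqref{eq:FermiWN} should read $\tr(U_z^{*}\partial_z U_z)$ for the integral to compute a winding number---as written it vanishes for $U_z=\bar z$). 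A single check at $m=0$, where $\hat A=\id_n\otimes T^{*}$ has index $n$ and $U_{F,z}=\bar z\,\id_n$, suffices to fix all signs simultaneously.
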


We remark that the Toeplitz extension \eqref{eq:Toe_ext} offers an index theoretic interpretation of this identity. The above equality of classes follows from the six-term exact sequence coming from the Toeplitz extension \eqref{eq:Toe_ext}. Indeed, the boundary map described in \eqref{eq:ToeBndry} maps classes of unitaries from the bulk algebra $C(S^1)$ to classes of projections in the
boundary algebra $\mathcal{K}(\ell^2(\N))$, whose $K$-theory classes are given by the winding number of the relevant unitary.
 
\subsection{The role of Toeplitz extensions in the bulk-edge correspondence}
The example of the Su--Schrieffer--Heeger model is in some sense paradigmatic, as other solid state systems can be modelled using related \Cs algebraic extensions, where Toeplitz algebras serve as models for the half-space system, while quotients of Toeplitz algebras are used to model the edge system. Likewise, the $K$-theory boundary map coming from the extension can be used to implement the bulk-edge correspondence, relating bulk invariants to edge invariants. We will now recall some recent results.

In \cite{BoKeRe}, the observable
algebra of the physical system is a crossed product \Cs algebra. The Toeplitz extensions for (twisted) crossed products by $\mathbb{Z}^n$
\[\xymatrix{
0 \ar[r]& C\otimes \mathcal{K}(\ell^2(\mathbb{N}))\ar[r] & \mathcal{T}(\beta) \ar[r] & C \rtimes_\beta \mathbb{Z}^n \ar[r] & 0
},\]
offers the natural framework for the investigation of the bulk-edge correspondence, as it elegantly links the algebras of the bulk and the edge systems.

In \cite{BM19}, the authors replace crossed products \Cs algebras by groupoid \Cs algebras. While crossed products are naturally an example of groupoid \Cs algebras, the advantage of this more general setting lies in the possibility of studying systems without translational symmetries, like those resulting from non-periodic $\mathbb{R}^d$-actions. The systems are still linked by a short exact sequence of the form
\[\xymatrix{
0 \ar[r] & C^*_r(\mathcal{Y},\sigma) \otimes \mathcal{K} \ar[r] & \mathcal{T} \ar[r] &  C^*_r(\mathcal{G},\sigma) \ar[r] & 0},\]
where $\sigma$ is a $2$-cocycle encoding the magnetic field, $\mathcal{Y}$ is a closed subgroupoid of the groupoid $\mathcal{G}$, and the algebra $\mathcal{T}$ models the half-space system.

Quite remarkably, in the one-dimensional case, the groupoid \Cs algebra admits an alternative description as Cuntz--Pimsner algebra of a self-Morita equivalence bimodule (cf. \cite[Subsection 2.3]{BM19}) . The map implementing the bulk-edge correspondence is realised as a Kasparov product with the unbounded representative for the class of the extension \eqref{eq:PimES}, as constructed in  \cite{GMR18} (see also \cite{AKL}). It remains an interesting open question whether groupoid $C^{*}$-algebras of higher dimensional systems admit a description in terms of \Cs algebras associated to families of \Cs correspondences, for instance in terms of product and subproduct systems \cite{Fow02,Fle18,ShSo09,Vis12}. 
\subsection*{Acknowledgment}
We are indebted to our colleagues and collaborators C. Bourne, M. Goffeng, and J. Kaad for inspiring conversations on topics related to this manuscript.

\bibliographystyle{abbrv}
\bibliography{AM_Toeplitz_arXiv.bib}

\begin{thebibliography}{10}

\bibitem{AKL}
F.~Arici, J.~Kaad, and G.~Landi.
\newblock Pimsner algebras and {G}ysin sequences from principal circle actions.
\newblock {\em J. Noncommut. Geom.}, 10(1):29--64, 2016.

\bibitem{Arv98}
W.~Arveson.
\newblock Subalgebras of {$C^*$}-algebras. {III}. {M}ultivariable operator
  theory.
\newblock {\em Acta Math.}, 181(2):159--228, 1998.

\bibitem{At89}
M.~F. Atiyah.
\newblock {\em {$K$}-theory}.
\newblock Advanced Book Classics. Addison-Wesley Publishing Company, Advanced
  Book Program, Redwood City, CA, second edition, 1989.
\newblock Notes by D. W. Anderson.

\bibitem{AS1}
M.~F. Atiyah and I.~M. Singer.
\newblock The index of elliptic operators. {I}.
\newblock {\em Ann. of Math. (2)}, 87:484--530, 1968.

\bibitem{Be92}
J.~Bellissard.
\newblock Gap labelling theorems for {S}chr\"{o}dinger operators.
\newblock In {\em From number theory to physics ({L}es {H}ouches, 1989)}, pages
  538--630. Springer, Berlin, 1992.

\bibitem{BevESB}
J.~Bellissard, A.~van Elst, and H.~Schulz-Baldes.
\newblock The noncommutative geometry of the quantum {H}all effect.
\newblock {\em J. Math. Phys.}, 35(10):5373--5451, 1994.
\newblock Topology and physics.

\bibitem{Bl06}
B.~Blackadar.
\newblock {\em Operator algebras}, volume 122 of {\em Encyclopaedia of
  Mathematical Sciences}.
\newblock Springer-Verlag, Berlin, 2006.
\newblock Theory of $C^*$-algebras and von Neumann algebras, Operator Algebras
  and Non-commutative Geometry, III.

\bibitem{BoKeRe}
C.~Bourne, J.~Kellendonk, and A.~Rennie.
\newblock The {$K$}-theoretic bulk-edge correspondence for topological
  insulators.
\newblock {\em Ann. Henri Poincar\'{e}}, 18(5):1833--1866, 2017.

\bibitem{BM19}
C.~Bourne and B.~Mesland.
\newblock Index theory and topological phases of aperiodic lattices.
\newblock {\em Ann. Henri Poincar\'{e}}, 20(6):1969--2038, 2019.

\bibitem{BDF77}
L.~G. Brown, R.~G. Douglas, and P.~A. Fillmore.
\newblock Extensions of {$C\sp*$}-algebras and {$K$}-homology.
\newblock {\em Ann. of Math. (2)}, 105(2):265--324, 1977.

\bibitem{BrOz08}
N.~P. Brown and N.~Ozawa.
\newblock {\em {$C^*$}-algebras and finite-dimensional approximations},
  volume~88 of {\em Graduate Studies in Mathematics}.
\newblock American Mathematical Society, Providence, RI, 2008.

\bibitem{Co67}
L.~A. Coburn.
\newblock The {$C^{\ast} $}-algebra generated by an isometry.
\newblock {\em Bull. Amer. Math. Soc.}, 73:722--726, 1967.

\bibitem{Co83}
A.~Connes.
\newblock Cohomologie cyclique et foncteurs {${\rm Ext}^n$}.
\newblock {\em C. R. Acad. Sci. Paris S\'{e}r. I Math.}, 296(23):953--958,
  1983.

\bibitem{Co94}
A.~Connes.
\newblock {\em Noncommutative geometry}.
\newblock Academic Press, Inc., San Diego, CA, 1994.

\bibitem{Co96}
A.~Connes.
\newblock Gravity coupled with matter and the foundation of non-commutative
  geometry.
\newblock {\em Comm. Math. Phys.}, 182(1):155--176, 1996.

\bibitem{CoMa}
A.~Connes and M.~Marcolli.
\newblock {\em Noncommutative geometry, quantum fields and motives}, volume~55
  of {\em American Mathematical Society Colloquium Publications}.
\newblock American Mathematical Society, Providence, RI; Hindustan Book Agency,
  New Delhi, 2008.

\bibitem{CoSk84}
A.~Connes and G.~Skandalis.
\newblock The longitudinal index theorem for foliations.
\newblock {\em Publ. Res. Inst. Math. Sci.}, 20(6):1139--1183, 1984.

\bibitem{Cu84}
J.~Cuntz.
\newblock {$K$}-theory and {$C^{\ast} $}-algebras.
\newblock In {\em Algebraic {$K$}-theory, number theory, geometry and analysis
  ({B}ielefeld, 1982)}, volume 1046 of {\em Lecture Notes in Math.}, pages
  55--79. Springer, Berlin, 1984.

\bibitem{Fle18}
J.~Fletcher.
\newblock Iterating the {C}untz-{N}ica-{P}imsner construction for compactly
  aligned product systems.
\newblock {\em New York J. Math.}, 24:739--814, 2018.

\bibitem{Fow02}
N.~J. Fowler.
\newblock Discrete product systems of {H}ilbert bimodules.
\newblock {\em Pacific J. Math.}, 204(2):335--375, 2002.

\bibitem{GN43}
I.~Gelfand and M.~Neumark.
\newblock On the imbedding of normed rings into the ring of operators in
  {H}ilbert space.
\newblock {\em Rec. Math. [Mat. Sbornik] N.S.}, 12(54):197--213, 1943.

\bibitem{GMR18}
M.~Goffeng, B.~Mesland, and A.~Rennie.
\newblock Shift-tail equivalence and an unbounded representative of the
  {C}untz-{P}imsner extension.
\newblock {\em Ergodic Theory Dynam. Systems}, 38(4):1389--1421, 2018.

\bibitem{GoKr}
I.~C. Gohberg and M.~G. Kre\u{\i}n.
\newblock The basic propositions on defect numbers, root numbers and indices of
  linear operators.
\newblock {\em Amer. Math. Soc. Transl. (2)}, 13:185--264, 1960.

\bibitem{JenTho}
K.~K. Jensen and K.~Thomsen.
\newblock {\em Elements of {$KK$}-theory}.
\newblock Mathematics: Theory \& Applications. Birkh\"{a}user Boston, Inc.,
  Boston, MA, 1991.

\bibitem{KPW98}
T.~Kajiwara, C.~Pinzari, and Y.~Watatani.
\newblock Ideal structure and simplicity of the {$C^\ast$}-algebras generated
  by {H}ilbert bimodules.
\newblock {\em J. Funct. Anal.}, 159(2):295--322, 1998.

\bibitem{KPW04}
T.~Kajiwara, C.~Pinzari, and Y.~Watatani.
\newblock Jones index theory for {H}ilbert {$C^*$}-bimodules and its
  equivalence with conjugation theory.
\newblock {\em J. Funct. Anal.}, 215(1):1--49, 2004.

\bibitem{Ka78}
M.~Karoubi.
\newblock {\em {$K$}-theory}.
\newblock Springer-Verlag, Berlin-New York, 1978.
\newblock An introduction, Grundlehren der Mathematischen Wissenschaften, Band
  226.

\bibitem{Kas80St}
G.~G. Kasparov.
\newblock Hilbert {$C\sp{\ast} $}-modules: theorems of {S}tinespring and
  {V}oiculescu.
\newblock {\em J. Operator Theory}, 4(1):133--150, 1980.

\bibitem{Kas80Ext}
G.~G. Kasparov.
\newblock The operator {$K$}-functor and extensions of {$C\sp{\ast}$}-algebras.
\newblock {\em Izv. Akad. Nauk SSSR Ser. Mat.}, 44(3):571--636, 719, 1980.

\bibitem{Kat04}
T.~Katsura.
\newblock On {$C^*$}-algebras associated with {$C^*$}-correspondences.
\newblock {\em J. Funct. Anal.}, 217(2):366--401, 2004.

\bibitem{La95}
E.~C. Lance.
\newblock {\em Hilbert {$C^*$}-modules}, volume 210 of {\em London Mathematical
  Society Lecture Note Series}.
\newblock Cambridge University Press, Cambridge, 1995.
\newblock A toolkit for operator algebraists.

\bibitem{Le}
M.~Lesch.
\newblock {$K$}-theory and {T}oeplitz {$C^*$}-algebras---a survey.
\newblock In {\em S\'{e}minaire de {T}h\'{e}orie {S}pectrale et
  {G}\'{e}om\'{e}trie, {N}o. 9, {A}nn\'{e}e 1990--1991}, volume~9 of {\em
  S\'{e}min. Th\'{e}or. Spectr. G\'{e}om.}, pages 119--132. Univ. Grenoble I,
  Saint-Martin-d'H\`eres, 1991.

\bibitem{Noe20}
F.~Noether.
\newblock \"{U}ber eine {K}lasse singul\"{a}rer {I}ntegralgleichungen.
\newblock {\em Math. Ann.}, 82(1-2):42--63, 1920.

\bibitem{PiVo80}
M.~Pimsner and D.~Voiculescu.
\newblock Exact sequences for {$K$}-groups and {E}xt-groups of certain
  cross-product {$C^{\ast} $}-algebras.
\newblock {\em J. Operator Theory}, 4(1):93--118, 1980.

\bibitem{Pi97}
M.~V. Pimsner.
\newblock A class of {$C^*$}-algebras generalizing both {C}untz-{K}rieger
  algebras and crossed products by {${\bf Z}$}.
\newblock In {\em Free probability theory ({W}aterloo, {ON}, 1995)}, volume~12
  of {\em Fields Inst. Commun.}, pages 189--212. Amer. Math. Soc., Providence,
  RI, 1997.

\bibitem{Pro}
E.~Prodan.
\newblock {\em A computational non-commutative geometry program for disordered
  topological insulators}, volume~23 of {\em SpringerBriefs in Mathematical
  Physics}.
\newblock Springer, Cham, 2017.

\bibitem{ProSB}
E.~Prodan and H.~Schulz-Baldes.
\newblock {\em Bulk and boundary invariants for complex topological
  insulators}.
\newblock Mathematical Physics Studies. Springer, [Cham], 2016.
\newblock From $K$-theory to physics.

\bibitem{ShSo09}
O.~M. Shalit and B.~Solel.
\newblock Subproduct systems.
\newblock {\em Doc. Math.}, 14:801--868, 2009.

\bibitem{SuSchHee}
W.~P. Su, J.~R. Schrieffer, and A.~J. Heeger.
\newblock Soliton excitations in polyacetylene.
\newblock {\em Phys. Rev. B}, 22:2099--2111, Aug 1980.

\bibitem{Up96}
H.~Upmeier.
\newblock {\em Toeplitz operators and index theory in several complex
  variables}, volume~81 of {\em Operator Theory: Advances and Applications}.
\newblock Birkh\"{a}user Verlag, Basel, 1996.

\bibitem{Vis12}
A.~Viselter.
\newblock Cuntz-{P}imsner algebras for subproduct systems.
\newblock {\em Internat. J. Math.}, 23(8):1250081, 32, 2012.

\end{thebibliography}

\end{document}